\newtheorem{thm}{Theorem}[section]
\newtheorem{lem}[thm]{Lemma}
\theoremstyle{definition}
\newtheorem{defin}[thm]{Definition}
\newtheorem{rem}[thm]{Remark}
\numberwithin{equation}{section}
 \newcommand{\setN}{\mathbb{N}}
 \newcommand{\setQ}{\mathbb{Q}}
 \newcommand{\setR}{\mathbb{R}}
 \newcommand{\setZ}{\mathbb{Z}}
\begin{document}

\baselineskip=17pt

\title[]
{Reduced set theory}

\author{Matthias Kunik}
\address{Universit\"{a}t Magdeburg\\
IAN \\
Geb\"{a}ude 02 \\
Universit\"{a}tsplatz 2 \\
D-39106 Magdeburg \\
Germany}
\email{matthias.kunik@ovgu.de}

\date{\today}
\maketitle


\begin{abstract}
We present a new fragment of axiomatic set theory for pure sets 
and for the iteration of power sets within given transitive sets. It turns out that this formal system admits an interesting hierarchy of models with true membership relation and with only finite or countably infinite ordinals. Still a considerable part of mathematics can be formalized within this system.
\end{abstract}

{\bf Keywords:} Formal mathematical systems, axiomatic set theory.\\

Mathematics Subject Classification: 03F03, 03E30\\

\section{Introduction}\label{intro}
%
%
In this article we present a generalization 
of Zermelo-Fraenkel set theory (ZFC), 
starting with a fragment of axiomatic set theory
which we will call RST, for reduced set theory.
We are only dealing with sets whose members are sets again.
First we will list the principles how we are dealing with sets in RST without using a formal language. These principles will be given
a precise form as axioms in Section \ref{RST}, where we use the formal systems from \cite[Sections 3,4]{Ku}.
For this purpose we need some crucial set constructions given
in Section \ref{construct}. 

A set $U$ is called \textit{transitive} iff $Y \subseteq U$ 
for all $Y \in U$. Every set can be extended to a transitive set
by Theorem \ref{tcthm1}. 
By $\mathcal{P}(Y) =\{ V : V \subseteq Y\}$
we denote the power set of $Y$.
We say a set $U$ is \textit{subset-friendly} iff 
\begin{itemize}
\item[1.] $\emptyset \in U$\,.
\item[2.] $U$ is transitive\,.
\item[3.] For all $Y \in U$ we have $\mathcal{P}(Y) \in U$\,.
\item[4.] For all $Y, Z \in U$ we have a transitive set\\ 
$V \in U$ with $\{Y,Z\} \subseteq V$\,.
\end{itemize}

\noindent
Now we are listing six principles according to which we are dealing with sets in RST. For sets $A$, $B$, $U$, $V$, $Y$ these are given by
\begin{itemize}
\item[P1.] \textit{Principle of extensionality.}
If $A$ and $B$ have the same elements, then $A=B$.
\item[P2.] \textit{Subset principle.}
If $\mathcal{F}$ is a property which may depend on previously given sets, then we can form the subset of $A$ given by
$U = \{Y :\,\mbox{there~holds~} Y \in A 
\mbox{~and~} Y \mbox{~has~property~}\mathcal{F}\}\,.$\\
Especially the empty set $\emptyset$ can be obtained from this principle.
\item[P3.] \textit{Principle of regularity.}
If $U$ is not the empty set,
then we have $Y \in U$ with $U \cap Y=\emptyset$.
\item[P4.] \textit{Principle for pairing of sets.}
If $A$ and $B$ are given, then we can find a set $U$
with $A \in U$ and $B \in U$. We can combine this with (P2)
to form $U=\{A, B\}$.
\item[P5.] \textit{Principle for subset-friendly sets.}
If $A$ is given, then we have a subset-friendly set $U$
with $A \in U$.
\item[P6.] \textit{Principle of choice.}
If $U$ has only nonempty and pairwise disjoint elements
then we can find a set $Y$ with the following property:
For every member $A \in U$ there exists exactly one set $V$ with
$Y \cap A = \{V\}$.
\end{itemize}
(P1)-(P6) will be given an exact form in Section \ref{RST} with corresponding RST-axioms (A1)-(A6).
In Section \ref{construct} we will give a motivation
for these principles.
The novel feature of (P5) is that it contains the set $A$ as 
para\-meter. Hence we can use it step by step.
We will first provide a subset-friendly set $U$ with $A=\emptyset \in U$.
Then we can apply (P5) to $A=U$ again, and so on.
The correctness of (P5) is guaranteed by Theorem \ref{spthm1}.

To specify the property $\mathcal{F}$ in (P2) exactly, we use the formal language of the predicate calculus. The language of set theory 
in Section \ref{RST} consists of a set
$X=\{\,{\bf x_1}\,,\,{\bf x_2}\,,\,{\bf x_3}\,,\, \ldots \,\}$
of variables, the equality predicate $\sim$ and a binary predicate
$\in$ for membership relation. Using variables $x,y \in X$
we start with atomic formulas $\sim x,y$ and $\in x,y$.
Let formulas $F$, $G$ be constructed previously.
Then we can form step by step the connectives 
$$
\neg F\,, \quad \to F G\,, \quad \& F G\,, \quad \vee F G\,, \quad
\leftrightarrow F G
$$
and the formulas
$$
\forall x\, F\,, ~ \exists x\,F\,.
$$

The Hilbert-style predicate calculus we use is only slightly different
from that of Shoenfield in his textbook \cite{Shoenfield}. 
In Section \ref{FMS} we provide results from \cite{Shoenfield} concerning conservative extensions of formal mathematical systems. 
This is applied in Section \ref{RST} to enrich the formal language of RST
with new symbols like $\emptyset$, $\subseteq$, $\cup$, $\mathcal{P}$,
without changing the provability of the original formulas in RST.
We present this result in Theorem \ref{nice_axioms}. Extensions of RST which we will call subset-friendly theories are naturally included in our approach,
see Definition \ref{sftheories}. In these theories the subset axioms 
still remain valid with new symbols in the formulas.
Theorem \ref{nice_axioms} also says that all axioms for sets in ZFC
given in \cite[Chapter 9]{Shoenfield} are already provable in RST, apart from the replacement axioms. In Section \ref{RST} we will 
use a semi formal approach to ensure these results, i.e.\ we will only employ well known results from elementary proof theory for the formal system RST and its extensions. The key idea behind the axioms (A5) 
for subset-friendly sets is that we can use them iteratively. 
In this way we have a sufficiently large set as background available. 
Within this set we can now perform the operations
listed in Remark \ref{sfrich}. Then we apply the subset axioms directly instead of the replacement axioms.

In Section \ref{RST_Model} we study a hierarchy of 
models for RST. The universe of each model is a 
subset-friendly set $\mathcal{U}_n$ given in \eqref{unisets}, and the membership relation in each model is the true membership relation between the individuals in the universe $\mathcal{U}_n$.
This is shown in Theorem \ref{thm_models}.
These are only the simplest models. All of them have only finite or countably infinite ordinals, see Theorem \ref{finalthm}. 
We will see that a considerable part of mathematics
can be formalized within RST. 
On the other hand, the models for RST without uncountable ordinals 
from Theorem \ref{thm_models} 
violate Zermelo's well-ordering theorem.
Zermelo's well-ordering theorem states 
that for every set $A$ there is 
a bijective mapping from an ordinal to $A$.
Nevertheless, because of (P6) any set can be well-ordered in RST.

At this place it is instructive to compare our approach with the
study of a so called Zermelo universe given in
Moschovakis \cite{Mosch}.
A Zermelo universe is a special model of a seminal axiomatic set theory originally given by Zermelo without using replacement axioms.
In our context with pure sets let $\omega$ be the set of all finite ordinals and let $U$ be a transitive set. 
We say that $U$ is a Zermelo universe if
$\omega \in U$ and if $U$ is closed under the opera\-tions 
of pairing, union and power sets. 
This looks similar to our definition of subset-friendly sets $U$, apart from the fact that for our application of (P5) we can already start with 
$A = \emptyset$. Based on the formal theory ZFC for pure sets
which makes use of the replacement axioms \eqref{repax}, 
the corresponding set constructions given in \eqref{tcsp}, Theorem \ref{tcthm1}, Theorem \ref{spthm1}
are indeed the same as in \cite[11.10]{Mosch}
and \cite[11.15]{Mosch}, with only a slightly different notation.
For these set constructions of the form
$\bigcup_{n=0}^{\infty} A_n$
we use the replacement axioms
only impli\-citly, without mentioning them.

However, in RST we do not make use of the replacement axioms.
Hence we really need the new Definition
\ref{interpret_axioms}(b) of subset-friendly sets 
for the axioms of RST in its given form. 
Our definition guarantees that any given subset-friendly set has the desired closure properties and sufficiently many transitive sets available as elements. Another difference in our approach is this: 
In our context the subset-friendly sets obtained from (P5) are members
of a larger universe for RST, but in general not universes of RST-models.
We use the set operation $\mathcal{SP}(\,\cdot\,)$ 
from \eqref{tcsp} and illustrate this for the construction of the simplest model set ${\mathcal U}_0$ of RST in \eqref{unisets}, where the subset-friendly sets
\begin{align*}
V_{0,0} \in V_{0,1} \in V_{0,2} \ldots
\end{align*}
are only special individuals of its universe
$$
{\mathcal U}_0 = \bigcup_{k=0}^{\infty} V_{0,k}\,.
$$
Then $A=\emptyset$, $U=\mathcal{SP}(\emptyset)=V_{0,0}$ and
$A=V_{0,k-1}$, $U=\mathcal{SP}(V_{0,k-1}) = V_{0,k}$ for $k \in \setN$
satisfy (P5), respectively. Finally, Theorem \ref{finalthm2} 
also guarantees an interesting countable transitive model for RST.


\section{Some notations and constructions with pure sets}\label{construct}
Throughout the whole paper we will only consider \textit{pure sets} without urelements. In addition we require
for every set $A=A_0$ that it does not allow an 
infinite descending sequence of sets
\begin{equation}\label{forbidden_sequence}
\ldots \in A_3 \in A_2 \in A_1 \in A_0\,.
\end{equation}
This is guaranteed by the principle of regularity which states that for every set $U \neq \emptyset$ there exists a set $Y$ with $Y \in U$ and $U \cap Y = \emptyset$.\\
Assume that we have an infinite sequence $A_0,A_1,A_2,\ldots$
satisfying \eqref{forbidden_sequence} and form the set
$U=\{A_0,A_1,A_2,\ldots\}$. 
If we apply the principle of regularity to $U$, 
then we can choose $Y=A_k$ with $k \in \setN_0$
and $U \cap Y=\emptyset$ and obtain the contradiction
$A_{k+1} \in A_k=Y$, $A_{k+1} \in U \cap Y$.\\
On the other hand we prescribe a nonempty set $U$ and choose $Y_0 \in U$.
For $U \cap Y_0 = \emptyset$ the set $U$ satisfies the principle of regularity with $Y=Y_0$.
Otherwise we have $Y_1 \in U \cap Y_0$, and if $Y_0,\ldots,Y_j$ with $j \in \setN$ 
are already given such that $Y_{k} \in U \cap Y_{k-1}$ for all $k=1, \ldots ,j$, then
$\begin{displaystyle}
Y_j \in Y_{j-1} \in \ldots  \in Y_0 \,,
\end{displaystyle}$
and we can proceed to form $Y_{j+1} \in U \cap Y_j$ as long as $U \cap Y_j \neq \emptyset$. 
Since we have excluded an infinite descending sequence of sets in \eqref{forbidden_sequence}, 
we obtain an index $n \in \setN_0$ with $Y_n \in U$ and $U \cap Y_n = \emptyset$.
We see that the principle of regularity (P3) given in the introduction is
a simple and natural condition.

The union of a set $A$ is
$$
\cup(A)=\{C : \textrm{there exists } B \in A \textrm{ with } 
C \in B \,\}\,,$$
and its power set is
$\begin{displaystyle}
\mathcal{P}(A)=\{B : B \subseteq A \}\,.
\end{displaystyle}$
More generally we have
\begin{equation*}
\cup^0(A) = A ~~\,\textrm{and}~~ 
\cup^{n}(A)=\cup(\cup^{n-1}(A))
~~\,\textrm{for all } n \in \setN
\end{equation*}
as well as
\begin{equation*}
\mathcal{P}^0(A) = A ~~\,\textrm{and }~~ 
\mathcal{P}^{n}(A)=\mathcal{P}(\mathcal{P}^{n-1}(A))
~~\,\textrm{for all } n \in \setN \,.
\end{equation*}

For a given set $A$ we also define 
\begin{equation}\label{tcsp}
\mathcal{TC}(A)=\bigcup_{n=0}^{\infty} \cup^{n}(A) 
\quad  \textrm{and} \quad
\mathcal{SP}(A)=\bigcup_{n=0}^{\infty} \mathcal{P}^{n}(A)\,.
\end{equation}
If $A \neq \emptyset$ is a set or a proper class of sets, then we have the set
$$
\cap(A)=\{ C : C \in B ~~\,\textrm{for all } ~ B \in A \}\,.
$$

\begin{defin}\label{interpret_axioms}
The following two definitions are crucial for the interpretation
of new set axioms which will be introduced in Section \ref{RST}.
\begin{itemize}
\item[(a)] A set $U$ is called \textit{transitive} iff $Y \subseteq U$ 
for all $Y \in U$.
\item[(b)] A set $U$ is called \textit{subset-friendly} iff 
\begin{itemize}
\item[1.] $\emptyset \in U$\,.
\item[2.] $U$ is transitive\,.
\item[3.] For all $Y \in U$ we have $\mathcal{P}(Y) \in U$\,.
\item[4.] For all $Y, Z \in U$ we have a transitive set 
$V \in U$ with $\{Y,Z\} \subseteq V$\,.
\end{itemize}
\end{itemize}
\end{defin}
\begin{thm}\label{tcthm1}
The following statements are equivalent for any set $T$.
\begin{itemize}
\item[(a)] $T$ is transitive\,,
\item[(b)] $T \subseteq \mathcal{P}(T)$\,,
\item[(c)] $\cup(T) \subseteq T$\,,
\item[(d)] $T = \mathcal{TC}(T)$\,.
\end{itemize}

For every set $A$ the so called transitive closure $\mathcal{TC}(A)$ of $A$
is a transitive set such that $A \subseteq \mathcal{TC}(A)$, the smallest transitive set $T$ with $A \subseteq T$, i.e.\ 
$
\mathcal{TC}(A) = \cap\,(\,\{ T \,:\,A \subseteq T \textrm{ and } T 
\textrm{ is a transitive set}\,\}\,)\,.
$
\end{thm}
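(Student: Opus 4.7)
The plan is to dispatch the equivalences in the cycle (a) $\Leftrightarrow$ (b), (a) $\Leftrightarrow$ (c), (a) $\Leftrightarrow$ (d) by unfolding the definitions of $\mathcal{P}$, $\cup$, and $\mathcal{TC}$, with the only nontrivial ingredient being a short induction for the closure of $\cup^n[T]$. For (a) $\Leftrightarrow$ (b), I would observe that $Y \subseteq T$ is by definition the same as $Y \in \mathcal{P}[T]$; so saying that every $Y \in T$ satisfies $Y \subseteq T$ is literally saying $T \subseteq \mathcal{P}[T]$. For (a) $\Leftrightarrow$ (c), I would chase elements: $\cup[T] \subseteq T$ means that whenever there exists $Y \in T$ with $C \in Y$ we have $C \in T$, which is the assertion that $Y \subseteq T$ for each $Y \in T$.

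For (a) $\Leftrightarrow$ (d), the direction (d) $\Rightarrow$ (a) is almost immediate: if $T = \mathcal{TC}[T]$ and $Y \in T$, then for any $C \in Y$ one has $C \in \cup[T] = \cup^{1}[T] \subseteq \mathcal{TC}[T] = T$, hence $Y \subseteq T$. For (a) $\Rightarrow$ (d), the inclusion $T \subseteq \mathcal{TC}[T]$ is clear since $T = \cup^{0}[T]$, and the reverse inclusion I would handle by induction on $n$, showing $\cup^{n}[T] \subseteq T$: the base case $n = 0$ is trivial, and the step uses the already-proved (a) $\Rightarrow$ (c), namely $\cup^{n+1}[T] = \cup[\cup^{n}[T]] \subseteq \cup[T] \subseteq T$.

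For the second part I would first show that $\mathcal{TC}[A]$ is transitive and contains $A$. Containment is automatic from $A = \cup^{0}[A]$. For transitivity, if $Y \in \mathcal{TC}[A]$ then $Y \in \cup^{n}[A]$ for some $n$, and for any $C \in Y$ I get $C \in \cup^{n+1}[A] \subseteq \mathcal{TC}[A]$. Minimality follows from a parallel induction: for any transitive $T \supseteq A$, I show $\cup^{n}[A] \subseteq T$ by induction on $n$, using the equivalent form (c) at the inductive step to pass from $\cup^{n}[A] \subseteq T$ to $\cup^{n+1}[A] \subseteq \cup[T] \subseteq T$; taking the union over $n$ gives $\mathcal{TC}[A] \subseteq T$. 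The displayed identity with $\cap$ is then a direct consequence of the minimality: $\mathcal{TC}[A]$ is the least element of the class of transitive supersets of $A$ under $\subseteq$, and any least element coincides with the intersection of the class.

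No step should present real difficulty; the only small subtlety is ensuring that the intersection in the final formula is meaningful, which follows because the class of transitive supersets of $A$ is nonempty (it contains $\mathcal{TC}[A]$), so the intersection is a well-defined set by the subset principle applied inside any fixed member.
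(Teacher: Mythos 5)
Your proposal is correct and follows essentially the same route as the paper: unfold the definitions for (a)$\Leftrightarrow$(b) and (a)$\Leftrightarrow$(c), and handle $\mathcal{TC}$ via the iterates $\cup^{n}$, with an induction on $n$ for transitivity and minimality. The only differences are cosmetic — you chase elements directly for (b)$\Leftrightarrow$(c) where the paper routes through the identity $\cup[\mathcal{P}[T]]=T$ and a small contradiction argument, and you make the equivalence with (d) explicit where the paper leaves it partly implicit in the final step $\mathcal{TC}[T]=T$.
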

\begin{proof}
 $T$ is transitive $\iff$ ($B \in T \implies B \subseteq T$ 
for all sets $B$)\\
$\iff$ ($B \in T \implies B \in \mathcal{P}(T)$ 
for all sets $B$) $\iff$ $T \subseteq \mathcal{P}(T)$.\\
Hence (a) and (b) are equivalent.
We have for all sets $A$ and $T$:\\
$A \in \cup(\mathcal{P}(T))$ $\iff$ 
(there exists $B \in \mathcal{P}(T)$ with $A \in B$)\\
$\iff$ (there exists $B \subseteq T$ with $A \in B$)
$\iff A \in T$\,.\\
We obtain $\cup(\mathcal{P}(T))=T$ for all sets $T$.
Especially for transitive $T$ we can use (b) and 
conclude that $\cup(T) \subseteq T$.
Now let $\cup(T) \subseteq T$ and assume that $T$
is not transitive. 
Then there exists $A$ with $A \in T$ which violates $A \subseteq T$.
Hence there exists $B \in A$ with $B \notin T$.
Here we obtain the contradiction $B \in \cup(T)\subseteq T$.
We see that (a)-(c) are equivalent.
The condition $A \subseteq \mathcal{TC}(A)$ is clear from the definition of
$\mathcal{TC}(A)$. In order to prove that $\mathcal{TC}(A)$ is transitive
we assume that $\cup(\mathcal{TC}(A))$ 
is not a subset of $\mathcal{TC}(A)$.
Then we have $C \in \cup(\mathcal{TC}(A))$ with
$C \notin \mathcal{TC}(A)$, and there exists $B \in \mathcal{TC}(A)$
with $C \in B$ and $B \in \cup^n(A)$ for some $n \in \setN_0$. 
We obtain the contradiction
$C \in \cup(\cup^n(A))=\cup^{n+1}(A) \subseteq \mathcal{TC}(A)$
and conclude that $\mathcal{TC}(A)$ is transitive.
Finally, if $A \subseteq T$ for a transitive set $T$,
then $\mathcal{TC}(A) \subseteq \mathcal{TC}(T)$
and $\mathcal{TC}(T)=T$ from (c).
\end{proof}
\begin{thm}\label{tcthm2}
For all sets $A, B$ we have
\begin{itemize} 
\item[(a)] $A \in B \implies A \neq B$,
\item[(b)] $A \notin \mathcal{TC}(A)$.
\end{itemize}
\end{thm}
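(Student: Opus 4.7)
My plan is to derive both statements from the principle of regularity (P3) applied to well-chosen finite sets. Note first that (a) is in fact a special case of (b): if $A \in B$ and $A = B$ then $A \in A$, so $A$ lies in $\cup^0[A] \subseteq \mathcal{TC}[A]$. Nevertheless, I would give a direct proof of (a) first for clarity, and then turn to (b).

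For (a), I assume for contradiction that $A \in B$ with $A = B$, hence $A \in A$. I form the singleton $U = \{A\}$, available via pairing and the subset principle. Applying (P3) to $U$ yields some $Y \in U$ with $U \cap Y = \emptyset$; the only option is $Y = A$, but $A \in A \cap U$ shows $U \cap Y \neq \emptyset$, a contradiction. Hence $A \neq B$.

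For (b), I assume for contradiction that $A \in \mathcal{TC}[A]$. By the definition in \eqref{tcsp} there exists some $n \in \setN_0$ with $A \in \cup^n[A]$. I would then unwind the iterated union step by step, using $\cup^n[A] = \cup[\cup^{n-1}[A]]$: the statement $A \in \cup^n[A]$ produces a witness $B_1$ with $A \in B_1$ and $B_1 \in \cup^{n-1}[A]$, then a witness $B_2$ with $B_1 \in B_2$ and $B_2 \in \cup^{n-2}[A]$, and so on, terminating with $B_n \in \cup^0[A] = A$. This yields a finite membership cycle
$$A \in B_1 \in B_2 \in \cdots \in B_n \in A,$$
which collapses to $A \in A$ when $n = 0$. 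I then collect these witnesses into the finite set $U = \{A, B_1, \ldots, B_n\}$ (using pairing together with the subset principle). By construction every element $Y \in U$ has an immediate predecessor in the cycle which itself belongs to $U$, so $U \cap Y \neq \emptyset$ for every $Y \in U$, contradicting (P3). Hence $A \notin \mathcal{TC}[A]$.

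The only mildly delicate step is the finite unwinding that extracts the explicit membership chain from $A \in \cup^n[A]$; this is a straightforward recursion on $n$, but it is the one place where one has to be careful about the direction of the inclusions. Once the cycle is in hand, regularity closes both parts immediately.
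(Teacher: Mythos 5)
Your proof is correct and follows essentially the same route as the paper: part (a) is identical (apply regularity to $\{A\}$), and for part (b) both arguments unwind $A \in \cup^n[A]$ into a finite membership cycle $A \in B_1 \in \cdots \in B_n \in A$. The only cosmetic difference is that you apply regularity directly to the finite set of cycle members, whereas the paper extends the cycle to a periodic infinite descending sequence and invokes the argument against \eqref{forbidden_sequence} from Section \ref{construct}; both close the proof equally well.
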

\begin{proof}
(a) Assume $A \in B$ and $A=B$. Then $A \in A$ contradicts
the principle of regularity, if applied to $U = \{A\}$\,.
(b) We see by induction over $n \in \setN_0$ that
$B \in \cup^{n}(A)$ iff for all $k \in \{0,\ldots,n\}$
there are sets $A_k$ with $A_k \in \cup^{k}(\mathcal{P}(A))$ 
and $B \in A_{n} \in \ldots \in A_{0}=A$\,.\\
Assume that $A \in \mathcal{TC}(A)$.
Then $A \in \cup^{n}(A)$ for some $n \in \setN_0$,
and from $A \in A_{n} \in \ldots \in A_{0}=A$
with sets $A_k \in \cup^{k}(\mathcal{P}(A))$ 
for $k \in \{0,\ldots,n\}$
we obtain a forbidden periodic sequence of the form given in
\eqref{forbidden_sequence}.
\end{proof}

\begin{defin}\label{setmap}
For any set $A$ we define 
\begin{itemize}
\item[(a)] its successor $A^+=\{A\} \cup A$\,,
\item[(b)] the transitive set
$\begin{displaystyle}
\mathcal{S}_{+}(A)= \mathcal{TC}(\{A\})=
\{A\} \cup \mathcal{TC}(A)\,.
\end{displaystyle}$
Let $T$ be any further set and put 
$\begin{displaystyle}
\mathcal{S}_{-}(T)= \cup(T \setminus \cup(T))\,
\end{displaystyle}$\,.
\noindent
Then we see from Theorem \ref{tcthm2} that 
$\mathcal{S}_{-}(\mathcal{S}_{+}(A))= A$.
Note that $A^+=\mathcal{S}_{+}(A)$
if $A$ is transitive, see Theorem \ref{tcthm1}.
\item[(c)] With a further set $B$ let 
$\langle A,B \rangle = \{ \{A\}, \{A,B\}\}$ 
be the ordered pair of $A$ and $B$.
\end{itemize}
\end{defin}

\begin{thm}\label{spthm1}
Let $A$ be a set and $T=\mathcal{TC}(A)$. Then
$\mathcal{SP}(T)$ is a subset-friendly set 
with $A \in \mathcal{SP}(T)$. 
Now $\mathcal{SP}(T)$  is the smallest subset-friendly set $U$ with $A \in U$, i.e.
$$
\mathcal{SP}(T) 
= \cap \, (\,\{ U \,:\,A \in U \textrm{ and } U 
\textrm{ is a subset-friendly set}\,\}\,)\,.
$$
\end{thm}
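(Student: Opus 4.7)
The plan is to check the four subset-friendliness conditions for $\mathcal{SP}[T]$ in turn, verify $A \in \mathcal{SP}[T]$, and then establish minimality.

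The technical backbone is a short induction showing that each iterated power set $\mathcal{P}^n[T]$ is transitive and that $\mathcal{P}^n[T] \subseteq \mathcal{P}^{n+1}[T]$. Since $T$ is transitive, Theorem \ref{tcthm1}(b) gives $T \subseteq \mathcal{P}[T]$, and inductively, if $\mathcal{P}^{n-1}[T]$ is transitive then any $Y \in \mathcal{P}^n[T]$ satisfies $Y \subseteq \mathcal{P}^{n-1}[T] \subseteq \mathcal{P}^n[T]$, making $\mathcal{P}^n[T]$ transitive as well. With this in hand, the four conditions fall out essentially by bookkeeping: $\emptyset \in \mathcal{P}[T]$ since $\emptyset \subseteq T$; if $Y \in \mathcal{P}^n[T]$ then $Y \subseteq \mathcal{P}^n[T] \subseteq \mathcal{SP}[T]$, and $\mathcal{P}[Y] \subseteq \mathcal{P}^{n+1}[T]$, so $\mathcal{P}[Y] \in \mathcal{P}^{n+2}[T] \subseteq \mathcal{SP}[T]$; for $Y,Z \in \mathcal{SP}[T]$, choose $n$ large enough that both $Y,Z \in \mathcal{P}^n[T]$ and take $V = \mathcal{P}^n[T]$, which is transitive, contains $Y$ and $Z$, and lies in $\mathcal{P}^{n+1}[T] \subseteq \mathcal{SP}[T]$. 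Finally $A \subseteq T$ gives $A \in \mathcal{P}[T] \subseteq \mathcal{SP}[T]$.

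For minimality, fix any subset-friendly set $U$ with $A \in U$. Propagating transitivity of $U$ through the union iterates $\cup^k[A]$ by induction on $k$ shows $\mathcal{TC}[A] \subseteq U$, so $T \subseteq U$. The hinge of the whole argument is to upgrade this to $T \in U$, and here condition 4 is indispensable: applied to $Y = Z = A$ it produces a transitive set $V \in U$ with $A \in V$, whence $T \subseteq V$ by Theorem \ref{tcthm1}, so $T \in \mathcal{P}[V]$; condition 3 gives $\mathcal{P}[V] \in U$, and the transitivity of $U$ finally yields $T \in U$. A straightforward induction using condition 3 then gives $\mathcal{P}^n[T] \in U$ for every $n \geq 0$, and one more appeal to the transitivity of $U$ delivers $\mathcal{P}^n[T] \subseteq U$, so $\mathcal{SP}[T] \subseteq U$. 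The intersection description follows at once since $\mathcal{SP}[T]$ itself belongs to the family being intersected. I expect the only genuinely subtle point to be the passage from $T \subseteq U$ to $T \in U$: conditions 1--3 on their own are too weak for this upgrade, which is precisely why clause 4 is built into the definition of subset-friendly set.
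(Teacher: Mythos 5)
Your proof is correct and follows essentially the same route as the paper: the same induction showing each $\mathcal{P}^n[T]$ is transitive with $\mathcal{P}^n[T]\subseteq\mathcal{P}^{n+1}[T]$, and the same pivotal use of condition 4 with $Y=Z=A$ to produce a transitive $V\in U$ containing $A$ (hence $T=\mathcal{TC}[A]\subseteq V$) in the minimality argument. The only cosmetic differences are that you verify $\emptyset\in\mathcal{SP}[T]$ directly from $\emptyset\subseteq T$ rather than via regularity, and in the minimality step you upgrade $T\subseteq V$ to $T\in U$ and then iterate $\mathcal{P}$ on $T$ inside $U$, whereas the paper iterates $\mathcal{P}$ on $V$ and uses $\mathcal{P}^n[T]\subseteq\mathcal{P}^n[V]\subseteq U$.
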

\begin{proof} 
We make use of \eqref{tcsp} and Theorem \ref{tcthm1}.
We see $A \subseteq T=\mathcal{TC}(A)$
and hence $A \in \mathcal{P}(T)$, $A \in \mathcal{SP}(T)$. 
Next we will check that $\mathcal{SP}(T)$
satisfies the four properties for a subset-friendly set.  
\begin{itemize}
\item[1.]
If $U \neq \emptyset$ is a transitive set then we have a set
$B \in U$ with $B \cap U = \emptyset$ from the 
principle of regularity. 
We have $B = \emptyset$ since $C \in B$ implies
$C \in U$ from the transitivity of $U$ and 
the contradiction $C \in B \cap U$. Therefore
the conditions $U \neq \emptyset$ and $\emptyset \in U$
are equivalent for each transitive set $U$, especially for $U$ in Definition \ref{interpret_axioms}(b).
\item[2.]
We see from Theorem \ref{tcthm1} by complete induction 
that $\mathcal{P}^n(T)$ is a transitive set for all $n \in \setN_0$
with $\mathcal{P}^n(T) \subseteq \mathcal{P}^{n+1}(T)$. 
We conclude that $\mathcal{SP}(T)$ is a transitive set
with $\emptyset \in \mathcal{SP}(T)$. 
\item[3.] Let $Y \in \mathcal{SP}(T)$.
Then $Y \in \mathcal{P}^n(T)$ for some $n \in \setN_0$
and hence $Y \subseteq \mathcal{P}^n(T)$
as well as $\mathcal{P}(Y) \subseteq \mathcal{P}^{n+1}(T)$
from the transitivity of $\mathcal{P}^n(T)$.
We obtain that 
$\mathcal{P}(Y) \in \mathcal{P}^{n+2}(T) \subseteq \mathcal{SP}(T)$.
\item[4.] For $Y, Z \in \mathcal{SP}(T)$ we have
$Y, Z \in \mathcal{P}^j(T)$
and 
$\{Y, Z\} \subseteq \mathcal{P}^{j}(T)$
for $j \in \setN_0$ large enough with the transitive set
$\mathcal{P}^{j}(T) \in \mathcal{SP}(T)$.
\end{itemize}

We see that $\mathcal{SP}(T)$ is a subset-friendly set
with $A \in \mathcal{SP}(T)$.
Now let $U$ be a subset-friendly set with $A \in U$.
We recall the function $\mathcal{S}_+$ in Definition \ref{setmap}.
Let $V \in U$ be transitive with $\{A\} \subseteq V$, i.e.\
$A \in V$, see Condition 4 in 
Definition \ref{interpret_axioms}(b) with $Y=Z=A$.
Then we must have $\mathcal{S}_+(A)=\{A\} \cup T \subseteq V$ 
from $\{A\} \subseteq V$,
$T \subseteq V$, $\mathcal{P}^n(T) \subseteq \mathcal{P}^n(V) \subseteq U$ 
for all $n \in N_0$, using that $U$ is a subset-friendly set.
We conclude that $\mathcal{SP}(T)$ is the smallest subset-friendly set $U$
with $A \in U$.
\end{proof}
\begin{rem}\label{sf_remark}
In this section we have summarized basic properties of subset-friendly sets which serve as a guideline for Sections \ref{RST}, \ref{RST_Model}. 
\begin{itemize}
\item[(a)] Theorem \ref{spthm1} guarantees that for every set $A$
there is a subset-friendly set $U$ with $A \in U$.
This will be stated as a new set axiom in Section \ref{RST}.
\item[(b)]If $U$ is a subset-friendly set and $V \in U$, then
$\mathcal{P}(V) \in U$ and hence
$\mathcal{P}(V) \subseteq U$ from the transitivity of $U$. 
If moreover $A \subseteq V$, then $A \in \mathcal{P}(V)$
and $A \in U$ from $\mathcal{P}(V) \subseteq U$.
We see that $\{Y,Z\} \in U$ for the set $\{Y,Z\}$ in Condition 4
of Definition \ref{interpret_axioms}(b).
The transitivity of $V$ in Condition 4 guarantees 
in addition that $Y \cup Z \subseteq V$ and hence $Y \cup Z \in U$.
\end{itemize}
\end{rem}

\section{Formal mathematical systems}\label{FMS}

We use notations and results from \cite[Sections 3,4]{Ku}
and from Shoenfield's textbook \cite{Shoenfield}.
In \cite[Section 3]{Ku} a recursive system $S$ closely related 
to Smullyan's elementary formal systems in \cite{Sm} is embedded into a formal mathe\-matical system $M$. 
In \cite[(3.13)]{Ku} we use five rules of inference, namely rules (a)-(e).
Rule (e) enables formal induction with respect to the recursively enumerable relations generated by the underlying recursive system $S$.
For our application to axiomatic set theory in Section \ref{RST} we 
do not need the general syntax described in \cite[(3.1)-(3.15)]{Ku}
and impose three restrictions on our formal systems.

\textit{The first restriction.}
We put $S=S_{\emptyset}=[\,[\,];[\,];[\,]\,]$
in order to avoid the use of rule (e). 
Then we can shortly write $M=[A; P; B]$
instead of $M=[S_{\emptyset}; A; P ;B]$
for our formal systems. Here $A$ is the set of constants and function symbols, $P$ the set of predicate symbols 
and $B$ the set of basis axioms,
i.e. the axioms of the given theory.

\textit{The second restriction.}
To each predicate symbol $p \in P$ we assign a fixed arity $n \in \setN_0$
which will be given in the informal description of the formal system.

\textit{The third restriction.}
In \cite[(3.15)]{Ku} formal mathe\-matical systems $[M;\mathcal{L}]$
with restrictions in the argument lists 
of the formulas are introduced. The set of restricted argument lists 
$\mathcal{L}$ contains the variables and is closed with respect to substitutions. To each constant or function symbol $a \in A$ we assign a fixed arity $n \in \setN_0$. For $n=0$ we say that $a$ is a constant symbol, and for $n \geq 1$ we say that $a$ is an $n$-ary function symbol.
Then $\mathcal{L}$ consists only on terms which are generated by the following rules.
\begin{itemize}
\item[1.] We have $x \in \mathcal{L}$ 
for all variables $x \in X=\{\,{\bf x_1}\,,\,{\bf x_2}\,,\,{\bf x_3}\,,\, \ldots \,\}$.
\item[2.] We have $a \in \mathcal{L}$ for all constant symbols $a \in A$.
\item[3.] Let $n>0$ and let $a$ be an $n$-ary function symbol in $A$.\\
Then $a(\lambda_1 \ldots \lambda_n) \in \mathcal{L}$ for all terms
$\lambda_1,\ldots,\lambda_n \in \mathcal{L}$.
\end{itemize}
Let $\Pi(M;\mathcal{L})$ be the set of formulas provable
in $[M;\mathcal{L}]$ by using only the rules of inference (a),(b),(c),(d).
Under these restrictions we make use of the following definition 
and of three subsequent theorems.
\begin{defin}\label{extensions}
Given are two formal mathematical systems $[M;\mathcal{L}]$ and 
$[M';\mathcal{L}']$ with $M=[A; P; B]$ and $M'=[A'; P'; B']$.
\begin{itemize}
\item[(a)] We say that $[M';\mathcal{L}']$ is an \textit{extension} 
of $[M;\mathcal{L}]$ if
$$
A \subseteq A'\,, \quad 
P \subseteq P'\,, \quad 
\mathcal{L} \subseteq \mathcal{L}' ~\textrm{~and~} ~
\Pi(M;\mathcal{L}) \subseteq \Pi(M';\mathcal{L}') \,.
$$
\item[(b)] Let $[M';\mathcal{L}']$ be an extension of 
$[M;\mathcal{L}]$. If we have in addition
$$
F \in \Pi(M';\mathcal{L}') \implies F \in \Pi(M;\mathcal{L})
$$
for all formulas $F$ in $[M;\mathcal{L}]$, then 
$[M';\mathcal{L}']$ is called a \textit{conservative extension} of 
$[M;\mathcal{L}]$. 
\end{itemize}
\end{defin}

The proofs of the following three theorems are analogous
to that of Shoenfield's theorems on functional extensions 
and extensions by definitions in \cite[Section 4.5]{Shoenfield}
and \cite[Section 4.6]{Shoenfield}.

\begin{thm}\label{conserv0}
Let $[M;\mathcal{L}]$ be a formal mathe\-ma\-tical system.
We write $M=[A; P; B]$. 
We choose a new $n$-ary predicate symbol 
$p \notin P$ with $n \in \setN_0$ and 
form $P'=P \cup \{p\}$.
Let $x_1,\ldots,x_n$ be distinct variables,
and let $G$ be a formula in $[M;\mathcal{L}]$
in which no variable other than 
$x_1,\ldots,x_n$ is free.
We put 
$\begin{displaystyle}
B'= B \cup \Big\{ \leftrightarrow p \, x_1, \ldots ,x_n \, G \Big\}
\end{displaystyle}$ and
$M'=[A; P'; B']$
(for $n=0$ we have $p \, x_1, \ldots ,x_n = p$)\,.
Then $[M';\mathcal{L}]$ is a conservative extension of $[M;\mathcal{L}]$.
\end{thm}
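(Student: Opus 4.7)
The plan is to follow Shoenfield's strategy for \emph{extensions by definitions}, which the statement itself suggests. First I would check that $[M';\mathcal{L}]$ is genuinely an extension of $[M;\mathcal{L}]$ in the sense of Definition \ref{extensions}(a). The sets of constants and function symbols coincide, so $\mathcal{L}$ is unchanged, and we have $P\subseteq P'$ and $B\subseteq B'$. Hence every derivation in $[M;\mathcal{L}]$ using rules (a)--(d) is literally also a derivation in $[M';\mathcal{L}]$, giving $\Pi(M;\mathcal{L})\subseteq \Pi(M';\mathcal{L})$.

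For the conservativity, the key is to define a translation $F\mapsto F^{\ast}$ from formulas of $[M';\mathcal{L}]$ to formulas of $[M;\mathcal{L}]$. One replaces each atomic occurrence $p\,t_1\ldots t_n$ (where $t_1,\ldots,t_n\in\mathcal{L}$, since $\mathcal{L}$ is closed under substitution) by the formula $G^{t_1,\ldots,t_n}$ obtained from $G$ by simultaneously substituting $t_i$ for $x_i$, with bound variables of $G$ renamed as necessary to avoid capture. The translation is defined to commute with $\neg$, $\to$, $\&$, $\vee$, $\leftrightarrow$, and with $\forall x$, $\exists x$; it fixes every formula of the original system, because no such formula contains $p$. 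Closure of $\mathcal{L}$ under substitution is what guarantees that $G^{t_1,\ldots,t_n}$ belongs to the restricted set of argument lists of $[M;\mathcal{L}]$, so $F^{\ast}$ is indeed a formula of $[M;\mathcal{L}]$.

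The main step is then an induction on the length of a derivation in $[M';\mathcal{L}]$, showing that $F\in\Pi(M';\mathcal{L})$ implies $F^{\ast}\in\Pi(M;\mathcal{L})$. The logical axiom schemes (propositional, identity, equality, substitution/quantifier) and the four inference rules (a)--(d) each commute with $\ast$; every axiom of $B$ is fixed by $\ast$; and the only new basis axiom, $\leftrightarrow p\,x_1\ldots x_n\,G$, translates to $\leftrightarrow G\,G$, which is a propositional tautology provable in $[M;\mathcal{L}]$. Once this induction is complete, conservativity is immediate: if $F$ is a formula of $[M;\mathcal{L}]$ and $F\in\Pi(M';\mathcal{L})$, then $F=F^{\ast}\in\Pi(M;\mathcal{L})$.

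The principal obstacle is the syntactic bookkeeping around substitution: one must verify that the renaming of bound variables of $G$, which is needed to form $G^{t_1,\ldots,t_n}$ without variable capture, is compatible with the commutation of $\ast$ with the substitution and quantifier rules. This is routine but delicate, and it is exactly the part of the argument whose technical details are handled by Shoenfield's treatment of functional extensions and extensions by definitions in \cite[Sections 4.5, 4.6]{Shoenfield}, which the theorem explicitly invokes.
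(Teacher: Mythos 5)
Your proposal is correct and matches the paper's intent exactly: the paper gives no explicit proof but states that the argument is analogous to Shoenfield's treatment of extensions by definitions in \cite[Sections 4.5, 4.6]{Shoenfield}, and your translation $F\mapsto F^{\ast}$ eliminating $p$ (with the new axiom collapsing to the tautology $\leftrightarrow G\,G$, and closure of $\mathcal{L}$ under substitution keeping everything inside the restricted system) is precisely that argument. The bookkeeping you flag about renaming bound variables of $G$ to avoid capture is the only delicate point, and it is handled exactly as in Shoenfield.
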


\begin{thm}\label{conserv1}
Let $[M;\mathcal{L}]$ with $M=[A; P; B]$
be a formal mathe\-ma\-tical system.
Choose a new constant symbol $c \notin A$, form 
$\tilde{A}=A \cup \{c\}$ and
$\begin{displaystyle}
\tilde{\mathcal{L}}=\{\lambda\frac{c}{z}\,:\,
\lambda \in \mathcal{L}\textrm{~and~}~z \in X\}\,.
\end{displaystyle}$
Let $u,v \in X$ be distinct variables
and $G$ a formula in $[M;\mathcal{L}]$
with $\mbox{free}(G) \subseteq\{u\}$.
Assume that $v$ is not occurring bound in $G$.
\begin{itemize}
\item[(a)] 
We put $B'=B \cup \{ G\frac{c}{u}\}$, $M'=[\tilde{A}; P; B']$.
If the formula $\exists u G$ is provable in $[M;\mathcal{L}]$, then 
$[M';\tilde{\mathcal{L}}]$ is a conservative extension of $[M;\mathcal{L}]$.
\item[(b)] 
Put 
$\begin{displaystyle}
B''= B \cup \Big\{
\leftrightarrow \, \sim u,c ~ G
\Big\}
\end{displaystyle}$ and 
$M''=[\tilde{A}; P; B'']$.
If $\exists u G$ and
$\rightarrow G \rightarrow\,G\frac{v}{u} \sim u,v$ 
are both provable in $[M;\mathcal{L}]$, then 
$[M'';\tilde{\mathcal{L}}]$ is a conservative extension 
of $[M;\mathcal{L}]$.
\end{itemize}
\end{thm}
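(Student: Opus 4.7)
The plan is to follow the classical strategy for conservative extensions by new constant symbols, in the style of Shoenfield's theorems on functional extensions and extensions by definitions in \cite[Sections 4.5--4.6]{Shoenfield}, adapted to the syntax of \cite[Sections 3,4]{Ku}. That $[M';\tilde{\mathcal{L}}]$ and $[M'';\tilde{\mathcal{L}}]$ are extensions of $[M;\mathcal{L}]$ is immediate from the construction: the alphabet is enlarged only by $c$, the set $\tilde{\mathcal{L}}$ by definition contains $\mathcal{L}$, and each of $B'$, $B''$ contains $B$. The real content is conservativity.

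For part (a), I would argue by \emph{constant elimination}. Let $F$ be a formula of $[M;\mathcal{L}]$ and fix a derivation $F_1,\ldots,F_n = F$ of $F$ in $[M';\tilde{\mathcal{L}}]$. Choose a variable $z \in X$ distinct from $u$ and $v$, not occurring in $G$, and not occurring anywhere in the derivation. Substituting $z$ for every occurrence of $c$ throughout the derivation should yield a derivation in $[M;\mathcal{L}]$: each original $M$-axiom is unchanged (since $c$ does not occur in it); the new basis axiom $G\frac{c}{u}$ becomes $G\frac{z}{u}$, which is a formula of $[M;\mathcal{L}]$; the inference rules (a)--(d) are preserved under the replacement of a constant by a fresh variable; and the endformula becomes $F\frac{z}{c} = F$ since $c$ does not occur in $F$. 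This produces a derivation of $F$ in $[M;\mathcal{L}]$ from the single extra hypothesis $G\frac{z}{u}$. The deduction theorem, generalization over $z$ (which is not free in $F$), and rebinding $z$ to $u$ then give provability of $\exists u\, G \to F$ in $[M;\mathcal{L}]$, and modus ponens with the hypothesized provability of $\exists u\, G$ closes part (a).

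For part (b), my plan is to reduce to part (a) by showing that the axiom sets $B''$ and $B'= B \cup \{G\frac{c}{u}\}$ prove the same formulas over $\tilde{\mathcal{L}}$. In one direction, substituting $c$ for $u$ in $\leftrightarrow \sim u,c\ G$ and using the logical theorem $\sim c,c$ delivers $G\frac{c}{u}$, so every theorem of $[M';\tilde{\mathcal{L}}]$ is a theorem of $[M'';\tilde{\mathcal{L}}]$. In the other direction, starting in $[M';\tilde{\mathcal{L}}]$ from $G\frac{c}{u}$: the implication $G \to \sim u,c$ follows by instantiating the uniqueness hypothesis $\rightarrow G \rightarrow G\frac{v}{u} \sim u,v$ with $c$ substituted for $v$ (which is legitimate because $v$ does not occur bound in $G$) and then applying $G\frac{c}{u}$, while $\sim u,c \to G$ follows from $G\frac{c}{u}$ and the equality axioms. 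Hence the two systems have the same theorems in $\tilde{\mathcal{L}}$, and the conservativity obtained in part (a) transfers to $[M'';\tilde{\mathcal{L}}]$.

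The main obstacle I anticipate is the careful syntactic bookkeeping: guaranteeing that $z$ is genuinely substitutable for $c$ throughout the proof (so that no variable capture occurs under any $\forall$ or $\exists$ along the way) and that the substitution identity $G\frac{c}{u}\frac{z}{c} = G\frac{z}{u}$ holds under the chosen $z$. These are precisely the reasons $z$ must be chosen to avoid $u$, $v$, the formula $G$, and the entire derivation. I would invoke the substitution and variant lemmas of \cite[Sections 3,4]{Ku}, which parallel Shoenfield's Chapter 2, at the corresponding places rather than re-deriving them, so that once these lemmas are in hand the construction is essentially mechanical and uniform in the given derivation.
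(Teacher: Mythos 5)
Your proposal is correct and follows essentially the argument the paper intends — it defers without writing them out to Shoenfield's proofs of the theorem on constants and of extensions by definitions: constant elimination plus the deduction theorem for (a), and mutual derivability of the defining axiom $\leftrightarrow \, \sim u,c~G$ and the axiom $G\frac{c}{u}$ (using the uniqueness hypothesis with $c$ substituted for $v$, legitimate since $v$ does not occur in $G$) for (b). One small refinement, matching how the paper itself uses the deduction theorem elsewhere (e.g.\ in Theorem \ref{durchschnitt}): apply the deduction theorem first to the \emph{closed} hypothesis $G\frac{c}{u}$ and only then generalize the constant $c$ to the fresh variable $z$ via \cite[(4.9) Corollary]{Ku}, which avoids having to justify a deduction theorem for a hypothesis containing the free variable $z$ in a calculus with unrestricted generalization.
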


\begin{thm}\label{conserv2}
Let $[M;\mathcal{L}]$ with $M=[A; P; B]$ be a formal mathe\-ma\-tical system.
We choose a new n-ary function symbol $f \notin A$ with $n \in \setN$ 
and form $\tilde{A}=A \cup \{f\}$.
Let $\begin{displaystyle}
\tilde{\mathcal{L}}
\end{displaystyle}$
be the smallest set of terms satisfying
\begin{itemize}
\item[$\bullet$] $\begin{displaystyle}
\mathcal{L} \subseteq \tilde{\mathcal{L}}\,,
\end{displaystyle}$
\item[$\bullet$] $\begin{displaystyle}
f(y_1 \ldots y_n) \in \tilde{\mathcal{L}}
\end{displaystyle}$ ~\,
for all $y_1, \ldots ,y_n \in X$\,,
\item[$\bullet$] 
$
\lambda\frac{\mu}{z} \in \tilde{\mathcal{L}}
$
~\,
for all $z \in X$ and for all $\lambda,\mu \in \tilde{\mathcal{L}}$\,.
\end{itemize}
Let $u,v,x_1,\ldots,x_n$ be distinct variables,
and let $G$ be a formula  in $[M;\mathcal{L}]$ in which 
no variable other than $u,x_1,\ldots,x_n$ is free. 
Assume that $v,x_1,\ldots,x_n$ are not occurring bound in $G$.
\begin{itemize}
\item[(a)] Put 
$\begin{displaystyle}
B'= B \cup \Big\{ G \, \frac{f(x_1 \ldots x_n)}{u} \Big\}
\end{displaystyle}$ and 
$M'=[\tilde{A}; P; B']$.
If the formula $\exists u G$ is provable in $[M;\mathcal{L}]$, then 
$[M';\tilde{\mathcal{L}}]$ is a conservative extension of $[M;\mathcal{L}]$.
\item[(b)] 
Put 
$\begin{displaystyle}
B''= B \cup \Big\{
\leftrightarrow \, \sim u,f(x_1 \ldots x_n) \, G
\Big\}
\end{displaystyle}$ and 
$M''=[\tilde{A}; P; B'']$.
If $\exists u G$ and
$\rightarrow G \rightarrow\,G\frac{v}{u} \sim u,v$ 
are both provable in $[M;\mathcal{L}]$, then 
$[M'';\tilde{\mathcal{L}}]$ is a conservative extension of $[M;\mathcal{L}]$.
\end{itemize}
\end{thm}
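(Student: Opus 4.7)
The plan is to adapt Shoenfield's proofs of the theorems on functional extensions and definitional extensions (Sections 4.5--4.6 of \cite{Shoenfield}) to the restricted setting $[M;\mathcal{L}]$. In both parts the strategy is to construct a translation $\tau$ from formulas of the extended system to formulas of $[M;\mathcal{L}]$ and to verify three things: that $\tau$ fixes every formula already in $\mathcal{L}$, that $\tau$ sends every axiom of the extended system to a formula provable in $[M;\mathcal{L}]$, and that $\tau$ commutes with each of the rules of inference (a)--(d) modulo provability in $[M;\mathcal{L}]$. Induction on proof length then yields conservativity: if $F \in \mathcal{L}$ belongs to $\Pi(M';\tilde{\mathcal{L}})$ (respectively $\Pi(M'';\tilde{\mathcal{L}})$), then $\tau(F) = F$ belongs to $\Pi(M;\mathcal{L})$.

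For part (a), I would define $\tau$ by eliminating outermost occurrences of $f$ inside atomic formulas: for $\psi$ containing the subterm $f(\lambda_1,\ldots,\lambda_n)$, set
\[
\tau(\psi) \;=\; \exists u \,\bigl( G\tfrac{\lambda_1,\ldots,\lambda_n}{x_1,\ldots,x_n} \;\&\; \tau(\psi')\bigr),
\]
where $u$ is fresh and $\psi'$ is obtained from $\psi$ by replacing that occurrence of $f(\lambda_1,\ldots,\lambda_n)$ with $u$. Termination is guaranteed by induction on the complexity of the term $\lambda \in \tilde{\mathcal{L}}$. Propagation over $\neg, \to, \&, \vee, \leftrightarrow, \forall, \exists$ is the obvious clause-wise commutation, with bound-variable renaming chosen so that the substitutions into $G$ remain admissible; here the hypothesis that $v, x_1, \ldots, x_n$ do not occur bound in $G$ is essential. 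Under this $\tau$, the new axiom $G\frac{f(x_1\ldots x_n)}{u}$ is sent to $\exists u\,G$, which is provable in $[M;\mathcal{L}]$ by assumption.

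For part (b), I would use the analogous translation but replace $\exists u(\ldots \& \ldots)$ by the universal form $\forall u(G\frac{\lambda_i}{x_i} \to \tau(\psi'))$, which is cleaner on nested occurrences of $f$. A short computation shows that the translation of the defining axiom $\leftrightarrow \sim u, f(x_1\ldots x_n)\; G$ reduces to a biconditional asserting exactly that some $u$ satisfies $G$ and that any two such are equal; both are among the hypotheses, hence the translation of this axiom is provable in $[M;\mathcal{L}]$. The identity $\tau(F) = F$ on $\mathcal{L}$-formulas is immediate since such $F$ contain no $f$; and if one chooses instead to reduce (a) to (b), one can first adjoin a fresh $(n{+}1)$-ary predicate $p$ via Theorem~\ref{conserv0} with defining formula $G$, though the direct argument above is shorter.

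The main obstacle in both parts is verifying that the rules of inference survive the translation, in particular the quantifier and substitution rules. The delicate point is that applying $\tau$ inside a formula with a bound variable can force variable renamings which must still allow the insertion of $G\frac{\lambda_i}{x_i}$ without capture. The hypotheses that the only free variables of $G$ lie among $u, x_1, \ldots, x_n$ and that $v, x_1, \ldots, x_n$ do not occur bound in $G$ are precisely what is needed to keep all substitutions admissible throughout the induction. This is the step where Shoenfield's detailed syntactic bookkeeping in Sections 4.5--4.6 of \cite{Shoenfield} is copied essentially verbatim, since the rules (a)--(d) of $[M;\mathcal{L}]$ differ only superficially from the predicate-calculus rules used there.
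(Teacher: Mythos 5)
The paper itself does not prove this theorem; it only records that the proofs are ``analogous'' to Shoenfield's on functional extensions and extensions by definitions, so you are filling in a real gap. For part (b) your plan is correct and is precisely the standard argument: with \emph{both} the existence and the uniqueness condition available, the clause-wise translation commutes with the connectives and quantifiers up to provable equivalence, the translated defining axiom and the translated equality axioms for $f$ are provable in $[M;\mathcal{L}]$, and induction on the length of proofs gives conservativity. (Your side remark that the new axiom of part (a) ``is sent to $\exists u\,G$'' is not what clause-wise commutation yields unless $G$ is atomic, but that is cosmetic.)

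Part (a) is where the proposal breaks down. The compositional translation $\tau$ is \emph{unsound} when only $\exists u\,G$ is provable, because it does not respect the equality axioms of $[M';\tilde{\mathcal{L}}]$. Concretely, take $n=1$ and $G=\,\vee\in u,x_1\,\neg\in u,x_1$, so that $\exists u\,G$ is provable and the new basis axiom is a tautology; the extension is certainly conservative. By rule (c) applied to an equality axiom, the formula
$$ \to\ \sim f(x_1),y\ \ \to\ \in f(x_1),z\ \in y,z $$
is provable in $[M';\tilde{\mathcal{L}}]$, yet its clause-wise translation is provably equivalent to $\to\ \exists u\sim u,y\ \ \to\ \exists u\in u,z\ \in y,z$, hence to $\to\ \exists u\in u,z\ \in y,z$, which is not provable in $[M;\mathcal{L}]$. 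The failure is structural: two atomic subformulas sharing the same term $f(\lambda_1\ldots\lambda_n)$ are sent to existential statements that may be realized by \emph{different} witnesses of $G$, so without the uniqueness condition $\tau$ destroys the congruence behaviour of $f$; this is exactly why Shoenfield's definitional extension of a function symbol assumes existence \emph{and} uniqueness. Your fallback of first adjoining a predicate via Theorem~\ref{conserv0} does not help, since a predicate for $G$ yields a function symbol only when uniqueness holds. Part (a) is a Skolem-type extension and needs a genuinely different argument --- e.g.\ semantically, every model of $[M;\mathcal{L}]$ expands by the axiom of choice to a model of $[M';\tilde{\mathcal{L}}]$, and one then invokes completeness for these systems; a purely syntactic proof requires Herbrand-- or epsilon-theorem--style machinery, because the deduction-theorem-plus-generalization trick that proves Theorem~\ref{conserv1}(a) for a single constant does not carry over to an axiom used through unboundedly many substitution instances $f(\lambda_1\ldots\lambda_n)$.
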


\section{The system RST of reduced set theory}\label{RST}
Now we put $M^{(0)}=[\,[~]; [\in]; B^{(0)}]$ and $\mathcal{L}^{(0)}=X$ with the set of all variables given in \cite[(1.1)(c)]{Ku}. The formal set axioms in $B^{(0)}$ will be given below. In addition to \cite[(3.3)(a)]{Ku} we will only allow 
prime formulas $\sim r,s$ and $\in r,s$ with variables $r,s \in X$.
The 2-ary symbol $\in$ will be used in the formal system RST
as well as for the membership relation in our informal english text, which
will not lead to confusion.
First we define the formal mathematical system 
$\text{RST}=[M^{(0)};\mathcal{L}^{(0)}]$ 
with the following axioms for $B^{(0)}$, where
$t,u,v,w,x,y,z \in X$ are distinct variables 
which may vary and which may be chosen arbitrarily.
\begin{itemize}
\item[A1.] Axioms of extensionality.
$$\to ~ \forall y \, \leftrightarrow \, \in y,u \, \in y,v ~ \sim u,v$$
\item[A2.] Subset axioms.
$$\exists u \, \forall y \, \leftrightarrow \, \in y,u \, \& \in y,x \, F$$
with RST-formulas $F$ and $u,x \notin \text{var}(F)$\,.
\item[A3.] Axioms of regularity.
$$\rightarrow ~~ \exists y \, \in y,u ~~ 
\exists y \,\, \& \in y,u ~\, 
\neg \, \exists z \,  \& \, \in z,u \, \in z,y$$
\item[A4.] Axioms for pairing of sets. \quad 
$\exists u \,\, \& \in x,u \in y,u$
\item[A5.] Axioms for subset-friendly sets.
\begin{equation*}
\begin{split}
\exists u \, ~&~\& \, \& \, \& \, \in x,u \\ 
~&\quad \forall y \, \rightarrow \, \in y,u ~ 
\forall z \,  \rightarrow \, \in z,y \, \in z,u\\
~&\quad \forall y \, \rightarrow \, \in y,u ~
\exists z \, \&\,\in z,u\\
~&\quad\quad  \forall v ~  
\leftrightarrow ~ \in v,z ~ \forall w \, 
\rightarrow \, \in w,v \in w,y \\
~&\quad \forall y ~ \rightarrow ~ \in y,u
~\forall z ~ \rightarrow ~ \in z,u\\
~&\quad\quad  \exists v ~ \& ~ \& \in v,u ~ \& \in y,v \in z,v\\
~&\quad\quad\quad \forall w \rightarrow \, \in w,v~
~\forall t \rightarrow \, \in t,w \in t,v  
\end{split}
\end{equation*}
\item[A6.] Axioms of choice.
\begin{align*}
&\to \, \forall x \, \to\,\in x,u ~ \exists w \in w,x\\
&\to \, \forall x \forall y \, \to\,\in x,u\\
&\qquad \qquad \to\, \in y,u\\
&\qquad \qquad \to\, \exists w \,\&\,\in w,x\, \in w,y ~ \sim x,y\\
&\qquad \exists y \, \forall x \,\to\,\in x,u\\
&\qquad \quad \exists v \, \& \& \in v,x \in v,y\\
&\qquad \quad \quad \forall w \, \to \& \in w,x \in w,y ~ \sim v,w
\end{align*}
\end{itemize}
\begin{rem}\label{axiomsremark}
In RST we can apply rule (c) for the collision-free substi\-tution of free variables as well as the replacement of bound variables. Therefore it would be sufficient to choose a single set of distinct variables 
$t,u,v,w,x,y,z \in X$ for the formulation of the axioms. However, the more general choice of axioms is better suited for our purposes.\\
We use a Hilbert-style calculus for the formal mathematical systems 
$[\mbox{M};\mathcal{L}]$. Let $F$ be a formula in
$[\mbox{M};\mathcal{L}]$ and $x \in X$.
Then $F \in \Pi(\mbox{M};\mathcal{L})$ iff
$\forall x\, F \in \Pi(\mbox{M};\mathcal{L})$
from \cite[(3.11)(a),(3.13)(b)(d)]{Ku}. Hence we can use
formulas with free variables in our axioms.
Now $\Pi(RST)$ denotes the set of formulas provable in RST.
\end{rem}
\begin{defin}\label{sftheories}
Let $[M;\mathcal{L}]$ be an extension of RST 
(including RST itself). 
We say that $[M;\mathcal{L}]$ is a subset-friendly theory,
or sf-theory for short, if in addition the formulas
$\exists u \, \forall y \, \leftrightarrow \, \in y,u \, \& \in y,x \, F$
are provable in $[M;\mathcal{L}]$ for all collections of distinct variables $x,y,u$ and for all formulas $F$ in $[M;\mathcal{L}]$ 
with $u,x \notin \mbox{var}(F)$.
\end{defin}
\begin{lem}\label{lem1}
Let $[M;\mathcal{L}]$ be a sf-theory.
\begin{itemize}
\item[(a)] With distinct variables $y,u,v \in X$ the formula
$$\leftrightarrow ~ \forall y \, \leftrightarrow \, \in y,u \, \in y,v ~ \sim u,v$$
is provable in $[M;\mathcal{L}]$.
\item[(b)] With distinct variables $x,y,u \in X$ the formula
$$\exists u \, \forall y \, \leftrightarrow \, \in y,u \, 
\& \in y,x \, F$$
is provable in $[M;\mathcal{L}]$ 
for all $[M;\mathcal{L}]$-formulas $F$\\ 
with $u \notin \text{free}(F)$\,.
\end{itemize}
\end{lem}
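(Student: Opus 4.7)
My plan is to handle the two parts independently. For part (a), the forward implication of the biconditional is exactly axiom A1 (no renaming is needed, since A1 already uses the variables $u,v,y$). The reverse implication $\sim u,v \to \forall y \, \leftrightarrow \, \in y,u \, \in y,v$ follows from the equality rules of the underlying Hilbert-style predicate calculus: the instance $\sim u,v \to (\in y,u \leftrightarrow \in y,v)$ of the equality/substitution schema for predicate symbols, together with universal generalization in $y$ and the deduction-style manipulations from \cite[(3.13)]{Ku}, yields the desired direction. A propositional combination of the two directions via rule (a) then produces the biconditional.

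For part (b), the task is to weaken the side condition on the subset axiom A2, which requires $u,x \notin \text{var}(F)$, to the weaker requirement $u \notin \text{free}(F)$. My plan is a reduction by variable renaming that manufactures an auxiliary formula satisfying the stronger side condition. First, I would rename the bound occurrences of $u$ in $F$ to a fresh variable $u'$, obtaining an equivalent formula $F_1$ with $u \notin \text{var}(F_1)$; this step uses $u \notin \text{free}(F)$ together with the admissibility of bound-variable replacement. Next I would pick a further fresh variable $x'$ distinct from $x,y,u$ and not occurring in $F_1$, rename any bound occurrences of $x$ in $F_1$ to avoid capture, and then substitute $x'$ for all free occurrences of $x$. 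The resulting formula $F^{\ast}$ satisfies $u,x \notin \text{var}(F^{\ast})$, so the sf-theory instance of A2 (Definition \ref{sftheories}) applies with $x'$ in place of the superset parameter and gives
\[
\exists u \, \forall y \, \leftrightarrow \, \in y,u \, \& \in y,x' \, F^{\ast}.
\]

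To finish I would universally generalize in $x'$ (permitted by Remark \ref{axiomsremark}) and then apply rule (c) to substitute $x$ back in for $x'$. Since $y$ and $u$ are the only bound variables in the outer formula and $x$ is absent from $F^{\ast}$ by construction, this substitution is collision-free and recovers $F^{\ast}\frac{x}{x'} = F$ up to bound-variable renaming. The main obstacle is purely syntactic bookkeeping: one must track the side conditions for each renaming and substitution step carefully, verifying that no free variable is unintentionally captured and that the resulting formula in the outer quantifier block matches $F$ exactly after substitution. No additional set-theoretic content beyond A1, A2 and the standard admissibility of bound-variable renaming in the predicate calculus is required; the lemma is essentially a purely syntactic strengthening of the axiom schemes.
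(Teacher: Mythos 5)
Part (a) is correct and matches the paper's proof exactly: the forward direction is an instance of A1, and the converse follows from the equality axioms of the underlying calculus; nothing more is needed there.

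In part (b) your overall strategy --- rename so that the stronger side condition $u,x \notin \text{var}(\cdot)$ of Definition \ref{sftheories} is met, apply the schema, then substitute back --- is the paper's strategy, but the specific axiom instance you invoke is not legitimate. After you substitute $x'$ for the free occurrences of $x$ in $F_1$, the variable $x'$ occurs free in $F^{\ast}$ wherever $x$ was free in $F$; since the lemma places no restriction on free occurrences of $x$ in $F$, in general $x' \in \text{var}(F^{\ast})$. The instance
$\exists u \, \forall y \, \leftrightarrow \, \in y,u \, \& \in y,x' \, F^{\ast}$
with $x'$ as the superset parameter therefore violates the side condition $u,x' \notin \text{var}(F^{\ast})$ demanded by Definition \ref{sftheories}, so this step fails. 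What your construction actually guarantees is $u,x \notin \text{var}(F^{\ast})$, so the instance you are entitled to is the one with $x$ (not $x'$) in the parameter position, namely $\exists u \, \forall y \, \leftrightarrow \, \in y,u \, \& \in y,x \, F^{\ast}$; applying rule (c) with $x' \mapsto x$ to \emph{that} formula gives $\exists u \, \forall y \, \leftrightarrow \, \in y,u \, \& \in y,x \, F^{\ast}\frac{x}{x'}$, which is exactly the paper's route. (Alternatively, do not substitute $x'$ into $F$ at all and instead use the fresh $x'$ as the parameter; then $x' \notin \text{var}(F_1)$ holds and the back-substitution only touches the parameter position.) One further small gap: after the back-substitution the inner formula is only a bound-variable variant of $F$, not $F$ itself, so to arrive at the formula stated in the lemma you still need the provable equivalence $\leftrightarrow F \, F^{\ast}\frac{x}{x'}$ together with the equivalence theorem \cite[(3.17)]{Ku}, which the paper invokes explicitly and which your ``up to bound-variable renaming'' remark glosses over.
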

\begin{proof}
(a) Since $[M;\mathcal{L}]$ is an extension of RST,
the exten\-sio\-nality \mbox{axioms} of RST 
are provable in $[M;\mathcal{L}]$. 
Now the statement simply follows from the  
axioms of equality in \cite[(3.10)]{Ku}.\\
(b) Let $F$ be a formula in $[M;\mathcal{L}]$ 
with $u \notin \text{free}(F)$\,.
Choose a variable $x' \notin \mbox{var}(F)$ different from $x,y,u$ and put 
$F' = F \frac{x'}{x}$.
We replace all the variables occurring bound in $F'$ by new variables,
different from $x,y,u$ and different from all variables in $F'$.
There results a new formula $F''$, and from \cite[(3.17)(b) Theorem]{Ku} 
we have
$\leftrightarrow F' F'' \in \Pi(M;\mathcal{L})$.
Due to \cite[(3.16) Lemma, part (a)]{Ku} we can apply rule (c)
to the last formula in order to replace the variable $x'$ by $x$.
Then we have
$\begin{displaystyle}
\leftrightarrow ~F'\frac{x}{x'} F''\frac{x}{x'} ~ 
= ~ \leftrightarrow ~F F''\frac{x}{x'}\in \Pi(M;\mathcal{L})
\end{displaystyle}$\,.
Now $u,x \notin \text{var}(F'')$, and 
$\begin{displaystyle}\exists u \, \forall y \, 
\leftrightarrow \, \in y,u \, \& \in y,x \, F''\end{displaystyle}$
is  provable in $[M;\mathcal{L}]$ 
due to Definition \ref{sftheories}. 
To this formula we can again apply rule (c)
in order to replace the variable $x'$ by $x$. Then
$$
\exists u \, \forall y \, \leftrightarrow 
\, \in y,u \, \& \in y,x \, F''\frac{x}{x'}
$$
and 
$\begin{displaystyle}
\leftrightarrow ~F F''\frac{x}{x'}
\end{displaystyle}$
are both provable in $[M;\mathcal{L}]$. 
We apply the equi\-valence theorem
\cite[(3.17)(a)]{Ku} and obtain that 
$\exists u \, \forall y \, \leftrightarrow \, \in y,u \, 
\& \in y,x \, F$
is provable in $[M;\mathcal{L}]$.
\end{proof}

\begin{lem}\label{lem2}
Let $F_1, F_2$ be formulas in
a formal mathematical system $[M;\mathcal{L}]$
and let $u,y$ be distinct variables. If
$\rightarrow F_1 \, F_2 \in \Pi(M;\mathcal{L})$ then
\begin{itemize}
\item[(a)] $\rightarrow \forall y F_1 ~\forall y F_2 ~
\in \Pi(M;\mathcal{L})$\,,
\item[(b)] $\rightarrow \exists y F_1 ~\exists y F_2 ~
\in \Pi(M;\mathcal{L})$\,,
\item[(c)] $\rightarrow \exists u \forall y F_1 ~\,
\exists u \forall y F_2 ~ \in \Pi(M;\mathcal{L})$\,.
\end{itemize}
\end{lem}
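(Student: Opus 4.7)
The plan is to derive parts (a) and (b) from the standard monotonicity of the quantifiers with respect to implication, and then obtain (c) by composing the two.

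For part (a), I would begin with the specialization instance $\rightarrow \forall y F_1 \, F_1$, which is (or is immediately derivable from) a quantifier axiom of the underlying predicate calculus of \cite{Ku}. Combining it with the hypothesis $\rightarrow F_1 F_2 \in \Pi(M;\mathcal{L})$ by transitivity of implication --- obtained via the tautology theorem together with rules (a), (b) of the calculus --- yields $\rightarrow \forall y F_1 \, F_2 \in \Pi(M;\mathcal{L})$. Since $y$ occurs bound in $\forall y F_1$ and is therefore not free in it, the $\forall$-introduction rule (in the form: from $\rightarrow G H$ with $y \notin \text{free}(G)$, infer $\rightarrow G \, \forall y H$) applies and produces $\rightarrow \forall y F_1 \, \forall y F_2$.

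Part (b) proceeds dually. Starting from the existential generalization instance $\rightarrow F_2 \, \exists y F_2$ and composing with the hypothesis by transitivity yields $\rightarrow F_1 \, \exists y F_2$. Since $y$ is bound in $\exists y F_2$, the dual $\exists$-introduction rule (from $\rightarrow G H$ with $y \notin \text{free}(H)$, infer $\rightarrow \exists y G \, H$) gives $\rightarrow \exists y F_1 \, \exists y F_2$. For part (c), first apply (a) to the hypothesis, producing $\rightarrow \forall y F_1 \, \forall y F_2$. Then apply (b) with $\forall y F_1$ and $\forall y F_2$ in the roles of $F_1$ and $F_2$, and with the bound variable being $u$ rather than $y$; since $u$ and $y$ are distinct by assumption, no variable clash arises, and we obtain the desired $\rightarrow \exists u \forall y F_1 \, \exists u \forall y F_2$.

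The only delicate point is verifying that the two monotonicity rules for $\forall$ and $\exists$ under implication are indeed available in the restricted calculus based on rules (a)--(d) of \cite{Ku}. Since that calculus is modeled closely on Shoenfield's \cite{Shoenfield}, these rules are standard derived consequences of the quantifier axioms together with the distribution of $\forall$ over implication, and I expect this to amount to a routine appeal to results already recorded in \cite[Section 3]{Ku}, in the same spirit as the equivalence theorem \cite[(3.17)(a)]{Ku} used in the proof of Lemma \ref{lem1}.
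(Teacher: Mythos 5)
Your proof is correct, and its overall skeleton coincides with the paper's: establish monotonicity of $\forall$ and of $\exists$ under a provable implication, then obtain (c) by applying (b) to the conclusion of (a) with $u$ as the quantified variable (which is exactly step 6 of the paper's proof). Where you differ is in how the monotonicity steps are derived. The paper first generalizes the hypothesis to $\forall y \rightarrow F_1\,F_2$ by rule (d) and then detaches, via rule (b), the already-recorded distribution theorems \cite[(3.18)(11),(12)]{Ku}, namely $\rightarrow \forall y \rightarrow F_1 F_2 \; \rightarrow \forall y F_1\, \forall y F_2$ and its existential companion. You instead re-derive monotonicity from first principles: specialization $\rightarrow \forall y F_1\, F_1$ (resp. existential generalization $\rightarrow F_2\, \exists y F_2$), transitivity of implication via the tautology theorem, and the $\forall$- and $\exists$-introduction rules with the eigenvariable condition $y \notin \mathrm{free}(\forall y F_1)$, resp. $y \notin \mathrm{free}(\exists y F_2)$. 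Your "delicate point" is indeed unproblematic: the introduction rules you need are available in \cite{Ku} (they appear elsewhere in this paper as \cite[(3.18)(18),(20),(21)]{Ku}), so your route is sound; the paper's version is merely shorter because it leans on the distribution theorems as a single citation rather than unfolding the derivation.
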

\begin{proof}
The following formulas are provable in $[M;\mathcal{L}]$.
\begin{itemize}
\item[1)] $\forall y \rightarrow F_1 \, F_2$ from 
$\rightarrow F_1 \, F_2 \in \Pi(M;\mathcal{L})$ and rule (d).
\item[2)] $\rightarrow ~ \forall y \rightarrow F_1 \, F_2~\,
\rightarrow \forall y F_1 ~\forall y F_2 ~$ from 
\cite[Theorem (3.18)(11)]{Ku}.
\item[3)] $\rightarrow \forall y F_1 ~\forall y F_2 ~$ from rule (b)
with 1) and 2), which shows (a).
\item[4)] $\rightarrow ~ \forall y \rightarrow F_1 \, F_2~\,
\rightarrow \exists y F_1 ~\exists y F_2 ~$ from 
\cite[Theorem (3.18)(12)]{Ku}.
\item[5)] $\rightarrow \exists y F_1 ~\exists y F_2 ~$ from rule (b)
with 1) and 4), which shows (b).
\item[6)] $\rightarrow \exists u \forall y F_1 ~
\exists u \forall y F_2 ~$, if we apply part (b) of the lemma on 3) .
\end{itemize}

\end{proof}

\begin{lem}\label{lem3}
Let $[M;{\mathcal L}]$ be a sf-theory,
$u$, $v$, $y$ be different \mbox{variables} and $F$ be a formula 
in $[M;{\mathcal L}]$. 
Then the following formula is provable in $[M;{\mathcal L}]$:
\begin{align*}
\to ~ & ~ \forall y \, \leftrightarrow \,~ \in y,u ~ F\\
\to ~ & ~ \forall y \, \leftrightarrow \,~ \in y,v ~ F\\
 & \sim u,v\,.
\end{align*}
\end{lem}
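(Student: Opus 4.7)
The plan is to reduce this uniqueness lemma to the extensionality equivalence provided by Lemma~\ref{lem1}(a), using pure propositional reasoning together with a standard quantifier distribution.

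First I would work at the propositional level with $y$, $u$, $v$, $F$ fixed. The formula
$$\to \, \leftrightarrow \in y,u \, F \, \to \, \leftrightarrow \in y,v \, F \, \leftrightarrow \in y,u \, \in y,v$$
is a propositional tautology in the prime formulas $\in y,u$, $\in y,v$ and the subformula $F$ (treating $F$ as an atom), so it is provable by the tautology theorem from \cite[Section 3]{Ku}. Generalizing with rule (d) I get
$$\forall y \, \to \, \leftrightarrow \in y,u \, F \, \to \, \leftrightarrow \in y,v \, F \, \leftrightarrow \in y,u \, \in y,v \,\in \Pi(M;\mathcal{L}).$$

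Next I would push the $\forall y$ inward. Using \cite[Theorem~(3.18)(11)]{Ku} (distribution of $\forall$ over $\to$) twice, together with modus ponens (rule (b)), the last displayed formula yields
$$\to \, \forall y \, \leftrightarrow \in y,u \, F \, \to \, \forall y \, \leftrightarrow \in y,v \, F \, \forall y \leftrightarrow \in y,u \, \in y,v \,\in \Pi(M;\mathcal{L}).$$
Here the distribution step is legal precisely because $y$ does not occur free in the antecedents once the $\forall y$ is applied. (If worry about the free occurrences of $y$ inside $F$ arises, a standard renaming argument exactly as in the proof of Lemma~\ref{lem1}(b) first replaces $y$ in the auxiliary antecedent, applies the distribution, then substitutes back via rule (c); so this is routine.)

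Finally, by Lemma~\ref{lem1}(a) the equivalence
$$\leftrightarrow \, \forall y \, \leftrightarrow \in y,u \, \in y,v \, \sim u,v$$
is provable in $[M;\mathcal{L}]$. Chaining this through the previous implication by two applications of rule (b) (i.e. replacing the final succedent $\forall y \leftrightarrow \in y,u \, \in y,v$ by $\sim u,v$ via the tautology $\to (A \leftrightarrow B) \to A \, B$) produces exactly
\begin{align*}
\to ~ & ~ \forall y \, \leftrightarrow \,~ \in y,u ~ F\\
\to ~ & ~ \forall y \, \leftrightarrow \,~ \in y,v ~ F\\
 & \sim u,v\,,
\end{align*}
as required. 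The only potential obstacle is bookkeeping around the free and bound occurrences of $y$ in $F$ when distributing $\forall y$ over the outer implications; but as in Lemma~\ref{lem1} this is handled by a preparatory bound-variable renaming, after which everything reduces to propositional tautologies and standard Hilbert-style quantifier rules.
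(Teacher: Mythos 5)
Your proof is correct and follows essentially the same route as the paper's: a propositional tautology, generalization over $y$, distribution of the quantifier (the paper uses the conjunctive antecedent together with Lemma~\ref{lem2}(a) and \cite[(3.18)(15)]{Ku}, where you use the curried form and \cite[(3.18)(11)]{Ku} twice), and finally Lemma~\ref{lem1}(a) with the equivalence theorem to replace $\forall y \, \leftrightarrow \, \in y,u \, \in y,v$ by $\sim u,v$. One small remark: your worry about free occurrences of $y$ in the antecedents is unfounded, since full distribution of $\forall y$ over $\to$ carries no variable conditions, so the preparatory renaming you sketch is unnecessary.
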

\begin{proof}
\noindent
\begin{itemize}
\item[1)] 
Due to the propositional calculus
\begin{align*}
\rightarrow ~~\,
\& ~ \leftrightarrow \,~ \in y,u ~ F
 \, \leftrightarrow \,~ \in y,v ~ F ~~
\leftrightarrow \,~ \in y,u \, \in y,v
\end{align*}
is provable in $[M;\mathcal{L}]$.
\item[2)] 
From 1) and Lemma \ref{lem2}(a) we obtain that
\begin{align*}
\rightarrow ~~\, \forall y ~
\& ~ \leftrightarrow \,~ \in y,u ~ F
 \, \leftrightarrow \,~ \in y,v ~ F ~~
\forall y \, \leftrightarrow \,~ \in y,u \, \in y,v
\end{align*}
is provable in $[M;\mathcal{L}]$.
\item[3)] Using Lemma \ref{lem1}(a) and the equivalence theorem 
\cite[(3.17)(a)]{Ku} we obtain from 2) that
\begin{align*}
\rightarrow ~~\, \forall y ~
\& ~ \leftrightarrow \,~ \in y,u ~ F
 \, \leftrightarrow \,~ \in y,v ~ F ~~
\sim u,v
\end{align*}
is provable in $[M;\mathcal{L}]$.
\item[4)] Due to 3) and \cite[(3.17)(a),(3.18)(15)]{Ku} the formula
\begin{align*}
\to ~\& & ~\forall y \, \leftrightarrow \,~ \in y,u ~ F\\
 &~ \forall y \, \leftrightarrow \,~ \in y,v ~ F\\
 &~ \sim u,v
\end{align*}
is provable in $[M;\mathcal{L}]$\,.
\end{itemize}
Now the statement of the lemma is a consequence 
of 4), using the propositional calculus.
\end{proof}

\begin{lem}\label{lem4}
Let $[M;{\mathcal L}]$ be a sf-theory,
$u$, $y$ be different \mbox{variables} and $F$ be a formula 
in $[M;{\mathcal L}]$  with $u \notin \mbox{free}(F)$. 
Then the following formula is provable in $[M;{\mathcal L}]$:
\begin{align*}
\leftrightarrow ~ & ~
\exists u \, \forall y \, \rightarrow \, F \, \in y,u\\
 & ~
\exists u \, \forall y \, \leftrightarrow \,~ \in y,u ~ F \,.
\end{align*}
\end{lem}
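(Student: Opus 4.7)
The plan is to establish both directions of the biconditional separately --- in the spirit of the proofs of Lemmas \ref{lem1}--\ref{lem3} --- and to assemble them by the propositional calculus.

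The ``$\leftarrow$'' direction is straightforward: $\leftrightarrow \, \in y,u \, F$ propositionally implies $\rightarrow F \, \in y,u$, so I would distribute $\forall y$ by Lemma \ref{lem2}(a) and existentially close under $\exists u$ by Lemma \ref{lem2}(b).

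For the nontrivial ``$\rightarrow$'' direction the key tool is the subset axiom in the form supplied by Lemma \ref{lem1}(b). First I would pick a variable $x \in X$ distinct from $u,y$ and not occurring in $F$ at all, renaming bound variables of $F$ via \cite[(3.17)(b)]{Ku} if necessary (exactly as in the proof of Lemma \ref{lem1}(b)). Since $u \notin \text{free}(F)$ by hypothesis, Lemma \ref{lem1}(b) then yields that $\exists u \forall y \, \leftrightarrow \, \in y,u \,\, \& \in y,x \, F$ is provable in $[M;\mathcal{L}]$. I would then observe the propositional tautology $\rightarrow \, \rightarrow F \, \in y,x \,\,\, \rightarrow \leftrightarrow \, \in y,u \,\, \& \in y,x \, F \,\,\, \leftrightarrow \, \in y,u \, F$, which --- under the additional assumption $\forall y \rightarrow F \, \in y,x$ --- converts the subset-axiom output into $\forall y \leftrightarrow \, \in y,u \, F$; after quantifier distribution (\cite[(3.18)(11)]{Ku} together with Lemma \ref{lem2}(a),(b)) this yields the provable implication $\rightarrow \, \forall y \rightarrow F \, \in y,x \,\,\, \exists u \forall y \leftrightarrow \, \in y,u \, F$.

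Finally, since $x$ was chosen fresh and therefore does not occur free in the right-hand side, I would invoke Shoenfield's $\exists$-extraction principle (a consequence of generalization and the laws in \cite[(3.18)]{Ku}) to promote this to $\rightarrow \, \exists x \forall y \rightarrow F \, \in y,x \,\,\, \exists u \forall y \leftrightarrow \, \in y,u \, F$, and then rename the bound $x$ back to $u$ via the equivalence theorem \cite[(3.17)]{Ku}, legal because $x \notin \text{free}(F)$. I expect the main obstacle to be precisely this variable bookkeeping: guaranteeing a genuinely fresh $x$ (possibly after a preliminary renaming of bound variables of $F$) and verifying the free-variable side conditions at every quantifier-distribution and equivalence step --- the same delicate substitution-and-renaming work already carried out in the proof of Lemma \ref{lem1}(b).
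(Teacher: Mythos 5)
Your proposal is correct and follows essentially the same route as the paper's proof: the easy direction via Lemma \ref{lem2}, and the hard direction by feeding a fresh variable into Lemma \ref{lem1}(b), applying the same propositional tautology to convert $\leftrightarrow \in y,u \,\& \in y,x\,F$ plus $\rightarrow F \in y,x$ into $\leftrightarrow \in y,u\,F$, and then distributing quantifiers. The only (immaterial) difference is at the end, where you existentially close over the fresh variable and then rename the bound variable, whereas the paper first substitutes the fresh variable by $u$ via rule (c) and then generalizes.
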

\begin{proof}
Let $v \notin \mbox{var}(F)$, $v$ different from $u, y$ be a new variable.
Then the following formulas are provable in $[M;{\mathcal L}]$:
\begin{itemize}
\item[1)] The axiom of the propositional calculus
\begin{align*}
\rightarrow ~ & ~\leftrightarrow ~\, \in y,u ~ \, \&\, \in y,v F\\
\rightarrow ~ & ~\rightarrow \, F \, \in y,v\\
 & ~\leftrightarrow \,~ \in y,u ~ F\,.
\end{align*}
\item[2)] From 1) and Lemma \ref{lem2}(a) 
\begin{align*}
\rightarrow ~ & \forall y ~\leftrightarrow ~\, \in y,u ~ \, \&\, \in y,v F\\
 & \forall y\, \rightarrow ~ \rightarrow \, F \, \in y,v\\
 & \qquad \quad \leftrightarrow \,~ \in y,u ~ F\,.
\end{align*}
\item[3)] From \cite[(3.18)(11)]{Ku} 
\begin{align*}
\rightarrow ~ & \forall y\, \rightarrow ~ \rightarrow \, F \, \in y,v\\
 & \leftrightarrow \,~ \in y,u ~ F\\
\rightarrow ~ & ~
\forall y \, \rightarrow \, F \, \in y,v\\
 & ~
\forall y \, \leftrightarrow \,~ \in y,u ~ F\,.
\end{align*}
\item[4)] From 2), 3) and the propositional calculus 
\begin{align*}
\rightarrow ~ & \forall y\, \leftrightarrow ~\in y,u ~ \& \, \in y,v\,F\\
\rightarrow ~ & ~ \forall y \, \rightarrow \, F \, \in y,v\\
 & ~\forall y \, \leftrightarrow \,~ \in y,u ~ F\,.
\end{align*}
\item[5)] From 4) and Lemma \ref{lem2}(b) 
\begin{align*}
\rightarrow ~ & ~ \exists u \forall y\, \leftrightarrow ~\in y,u ~ \& \, \in y,v\,F\\
&  ~ \exists u \rightarrow ~ \forall y \, \rightarrow \, F \, \in y,v\\
&  \qquad \quad \quad ~ \forall y \, \leftrightarrow \,~ \in y,u ~ F\,.
\end{align*}
\item[6)] From Lemma \ref{lem1}(b) due to $u \notin \mbox{free}(F)$
\begin{align*}
\exists u \forall y\, \leftrightarrow ~\in y,u ~ \& \, \in y,v\,F\,.\\
\end{align*}
\item[7)] From 5), 6) and rule (b) 
\begin{align*}
&  ~ \exists u \rightarrow ~ \forall y \, \rightarrow \, F \, \in y,v\\
&  \qquad \quad \quad ~ \forall y \, \leftrightarrow \,~ \in y,u ~ F\,.
\end{align*}
\item[8)] From \cite[(3.18)(21)]{Ku} and 7) with $J = \,\to$,
$u \notin \mbox{free}(F)$
\begin{align*}
&  \rightarrow ~ \forall y \, \rightarrow \, F \, \in y,v\\
&  \qquad \exists u ~ \forall y \, \leftrightarrow \,~ \in y,u ~ F\,.
\end{align*}
\item[9)] From 8) and rule (c) with $v \mapsto u$, using that $v \notin \mbox{var}(F)$:
\begin{align*}
&  \rightarrow ~ \forall y \, \rightarrow \, F \, \in y,u\\
&  \qquad \exists u ~ \forall y \, \leftrightarrow \,~ \in y,u ~ F\,.
\end{align*}
\item[10)] From 9) and rule (d) 
\begin{align*}
& \forall u \rightarrow ~ \forall y \, \rightarrow \, F \, \in y,u\\
&  ~\qquad \exists u ~ \forall y \, \leftrightarrow \,~ \in y,u ~ F\,.
\end{align*}
\item[11)] From 10) and \cite[(3.18)(18)]{Ku}
\begin{align*}
\rightarrow &~ \exists u \forall y \, \rightarrow \, F \, \in y,u\\
&~ \exists u ~ \forall y \, \leftrightarrow \,~ \in y,u ~ F\,.
\end{align*}
\item[12)] With $F_1 = \,\leftrightarrow \,~ \in y,u ~ F$ and  
$F_2 = \, \rightarrow \, F \, \in y,u$ from Lemma \ref{lem2}(c)
\begin{align*}
\rightarrow &~ \exists u ~ \forall y \, \leftrightarrow \,~ \in y,u ~ F\\
&~ \exists u \forall y \, \rightarrow \, F \, \in y,u\,.\\
\end{align*}
\end{itemize}
From 11) and 12) we obtain the desired result.
\end{proof}

\begin{lem}\label{const_erw}
Let $[M;\mathcal{L}]$ with $M=[A; P; B]$
be a sf-theory. Let $[M_c;\mathcal{L}_c]$ 
with $M_c=[A \cup \{c\}; P; B]$
result from $[M;\mathcal{L}]$
by adding a new constant symbol $c \notin A$.
Then $[M_c;\mathcal{L}_c]$ is a conservative 
and subset-friendly extension of $[M;\mathcal{L}]$.
\end{lem}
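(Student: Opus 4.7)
The plan is to handle the two claims separately. For conservativity, I would first observe that $[M_c;\mathcal{L}_c]$ is trivially an extension of $[M;\mathcal{L}]$ in the sense of Definition \ref{extensions}(a): all four inclusions are immediate from the construction of $M_c$ and $\mathcal{L}_c$. To upgrade this to a conservative extension, I would apply Theorem \ref{conserv1}(a) with the dummy formula $G=\,\sim u,u$. The formula $\exists u\,\sim u,u$ is provable from the equality axioms in \cite[(3.10)]{Ku}, so the theorem yields a conservative extension $[M';\mathcal{L}_c]$ of $[M;\mathcal{L}]$ with basis $B\cup\{\sim c,c\}$. But $\sim c,c$ is itself provable in $[M_c;\mathcal{L}_c]$ from the equality axiom $\sim x,x$ by an application of rule (c), so $\Pi(M';\mathcal{L}_c)=\Pi(M_c;\mathcal{L}_c)$, and conservativity of $[M';\mathcal{L}_c]$ transfers to $[M_c;\mathcal{L}_c]$.

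For subset-friendliness, I would fix an arbitrary formula $F$ in $[M_c;\mathcal{L}_c]$ together with distinct variables $x,y,u$ satisfying $u,x\notin\mbox{var}(F)$, and aim to derive
\begin{equation*}
\exists u\,\forall y\,\leftrightarrow\,\in y,u\,\&\,\in y,x\,F
\end{equation*}
in $[M_c;\mathcal{L}_c]$. The strategy is to pull the problem back into $[M;\mathcal{L}]$ by eliminating the new constant. Choose a variable $z$ distinct from $x,y,u$ with $z\notin\mbox{var}(F)$, and let $F'$ denote the formula obtained by syntactically replacing every occurrence of $c$ in $F$ by $z$. Then $F'$ lies in $[M;\mathcal{L}]$, still satisfies $u,x\notin\mbox{var}(F')$, and $F'\frac{c}{z}=F$ holds by construction. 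Since $[M;\mathcal{L}]$ is a sf-theory, the subset axiom for $F'$ is provable in $[M;\mathcal{L}]$, hence also in the extension $[M_c;\mathcal{L}_c]$. A final application of rule (c) substituting the constant $c$ for the free variable $z$ then yields the subset axiom for $F$; this substitution is unconditionally valid because $c$ contains no variables and therefore no capture can occur.

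The only real obstacle is bookkeeping: one must pick $z$ fresh with respect to all variables occurring in $F$ and distinct from $x,y,u$, so that the two syntactic operations (passing from $F$ to $F'$ and then recovering $F$ via rule (c)) are genuine inverses on the formulas at hand. Once $z$ is chosen carefully, the remaining steps reduce to Theorem \ref{conserv1}(a), the sf-property of $[M;\mathcal{L}]$, and standard applications of the substitution rule of the predicate calculus, so no deep new ideas should be required beyond the machinery already set up in Section \ref{FMS}.
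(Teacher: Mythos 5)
Your proof is correct, and the subset-friendliness half is essentially identical to the paper's argument: replace the constant $c$ everywhere in $F$ by a fresh variable (the paper uses $v$ where you use $z$), invoke the sf-property of $[M;\mathcal{L}]$ on the resulting formula $F'$, and recover the subset axiom for $F$ by rule (c), noting that substituting a constant for a variable can never cause capture. The only place you diverge is the conservativity claim. The paper disposes of it in one line by citing \cite[(4.9) Corollary]{Ku}, the result on generalization of constants, which says directly that adjoining a fresh constant yields a conservative extension. You instead derive it from Theorem \ref{conserv1}(a) applied to the trivially satisfiable formula $G=\,\sim u,u$: since $\exists u \sim u,u$ is provable from the equality axioms, that theorem gives a conservative extension with the extra axiom $\sim c,c$, which is redundant because $\sim c,c$ is already provable in $[M_c;\mathcal{L}_c]$, so $\Pi(M_c;\mathcal{L}_c)\subseteq\Pi(M';\mathcal{L}_c)$ and conservativity transfers downward. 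This detour is sound (one only needs the inclusion of provable formulas, not the equality you assert, though the equality does hold) and has the mild advantage of staying entirely within the machinery of Section \ref{FMS} rather than appealing to an external corollary; the paper's route is shorter but leans on the cited reference.
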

\begin{proof}
It follows from \cite[(4.9) Corollary]{Ku} that
$[M_c;\mathcal{L}_c]$ is a conservative extension of $[M;\mathcal{L}]$.
Let $x,y,u$ be distinct variables and $F$ be a formula
in $[M_c;\mathcal{L}_c]$ with $u,x \notin \mbox{var}(F)$.
Let $v \notin \mbox{var}(F) \cup \{x,y,u\}$ be a variable
and let $F'$ result from $F$ if we replace 
everywhere in $F$ the constant $c$ by $v$.
Then $F'$ is a formula in $[M;\mathcal{L}]$ with 
$u,x \notin \mbox{var}(F')$, and 
$$
H = \exists u \, \forall y \, \leftrightarrow \, \in y,u \, \& \in y,x \, F'
$$
is provable in the sf-theory $[M;\mathcal{L}]$.
We see that
$$
H\frac{c}{v} = \exists u \, \forall y \, 
\leftrightarrow \, \in y,u \, \& \in y,x \, F
$$
is provable in $[M_c;\mathcal{L}_c]$ from the substitution rule (c).
\end{proof}

\begin{thm}\label{durchschnitt}
Let $[M;\mathcal{L}]$ be a sf-theory
and let $x,y,z,v,w$ be distinct variables.
Then the formula
$$
\to ~ \exists v \in v,x ~ \exists w \, \forall z \,
\leftrightarrow \, \in z,w ~ \forall y \,\to \, \in y,x \in z,y
$$
is provable in $[M;\mathcal{L}]$.
\end{thm}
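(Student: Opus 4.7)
The plan is to construct the intersection $\cap[x]$ as a subset of any witness $v \in x$. Set $F \,:=\, \forall y \, \to \in y,x \, \in z,y$. By Lemma~\ref{lem4} applied with $u$ taken as $w$ and $y$ taken as $z$ (its side condition $w \notin \mbox{free}(F)$ holds since $w$ is distinct from $x,y,z$), the formulas $\exists w \, \forall z \, \to F \in z,w$ and $\exists w \, \forall z \leftrightarrow \in z,w ~ F$ are provably equivalent in $[M;\mathcal{L}]$. Hence by the equivalence theorem \cite[(3.17)(a)]{Ku} it suffices to prove
$$
\to ~ \exists v \in v,x ~~ \exists w \, \forall z \, \to F \, \in z,w\,.
$$

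First I would derive the ``one-point'' formula $H \,:=\, \to \in v,x ~ \forall z \to F \in z,v$. By the quantifier axiom for the substitution $y \mapsto v$ in the matrix $\to \in y,x \in z,y$ (permissible because $v$ is distinct from the free variables $x,z$ and cannot be captured), the formula $\to F ~ \to \in v,x \in z,v$ is provable; propositional rearrangement turns this into $\to \in v,x ~ \to F \in z,v$. Generalising over $z$ by rule~(d) and applying the theorem $\to \forall z \to A B ~ \to A \forall z B$ from \cite[(3.18)]{Ku} (valid because $z$ is not free in $\in v,x$) now produces $H$.

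Next I would push the existential across the implication. Rule~(d) gives $\forall v H$, and the standard theorem $\to \forall v \to A B ~ \to \exists v A ~ \exists v B$ from \cite[(3.18)]{Ku} yields $\to \exists v \in v,x ~~ \exists v \forall z \to F \in z,v$. Renaming the bound $v$ to $w$ via \cite[(3.17)(b)]{Ku} (legitimate because $w$ does not occur in $\forall z \to F \in z,v$) supplies $\to \exists v \in v,x ~~ \exists w \forall z \to F \in z,w$, and combining with the biconditional above through \cite[(3.17)(a)]{Ku} closes the proof.

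The main obstacle is purely bookkeeping: at each step one must verify a variable side condition (substitutability of $v$ for $y$ in $F$, non-freeness of $z$ in $\in v,x$, non-freeness of $w$ in $F$, and absence of capture when renaming bound $v$ to $w$). All of these hold because the variables $t,u,v,w,x,y,z$ are pairwise distinct by convention. No further set-theoretic ingredient beyond Lemma~\ref{lem4} is needed, since that lemma already encapsulates the single application of the subset axiom that cuts $\{z \in v : F\}$ out of $v$.
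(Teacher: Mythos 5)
Your proof is correct, and it takes a genuinely different route from the paper's. The paper works through conservative extensions: it adjoins a constant $c$ together with the hypothesis $\exists v \in v,c$, then a constant $d$ with the axiom $\in d,c$ via Lemma \ref{const_erw} and Theorem \ref{conserv1}(a), applies the subset axiom inside $d$ to form the set of all $z \in d$ with $\forall y \,\to\, \in y,c \in z,y$, observes that the bounding conjunct $\in z,d$ is absorbed because $\in d,c$ holds, and finally discharges everything with the deduction theorem and generalization of the constants. You instead stay entirely in the object language: Lemma \ref{lem4} (with $(u,y)$ instantiated as $(w,z)$) reduces the claim to showing that the class defined by $F$ is bounded by some set whenever $x$ is nonempty, and you produce the bound by pure quantifier logic — instantiating the universally quantified $y$ of $F$ at $v$ gives $\to \in v,x ~ \forall z \to F \in z,v$, after which Lemma \ref{lem2}(b) pushes the existential through and a bound-variable renaming finishes. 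Both arguments rest on the same observation, namely that $\cap[x]$ is contained in any member of $x$, but yours delegates the single application of the subset axiom to Lemma \ref{lem4} and thereby avoids the constant-introduction and deduction-theorem machinery altogether; the paper's version is closer to the informal argument one would narrate. Your side conditions all check out: $w \notin \mbox{free}(F)$ for Lemma \ref{lem4}, $z$ not free in $\in v,x$ for the distribution step, and $w$ not occurring in $\forall z \to F \in z,v$ for the renaming, all guaranteed by the distinctness of $x,y,z,v,w$.
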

\begin{proof}
We add a new constant symbol $c$ to $[M;\mathcal{L}]$ and form
$[M_c;\mathcal{L}_c]$ as in Lemma \ref{const_erw}.
Let $[M';\mathcal{L}']$ result from $[M_c;\mathcal{L}_c]$ 
by adding the new basis axiom ~$\exists v \in v,c$~
to $[M_c;\mathcal{L}_c]$. We see from Lemma \ref{const_erw}
that $[M';\mathcal{L}']$ is a subset-friendly extension
of $[M;\mathcal{L}]$. Let $[M'';\mathcal{L}'']$  
result from $[M';\mathcal{L}']$ by adding a new constant symbol $d$ 
and the new basis axiom $\in d,c$ to $[M';\mathcal{L}']$.
Due to Lemma \ref{const_erw} and Theorem \ref{conserv1}(a)
we obtain that $[M'';\mathcal{L}'']$ is a conservative and
subset-friendly extension of $[M';\mathcal{L}']$. 
Due to Definition \ref{sftheories} the following
formula is provable in $[M'';\mathcal{L}'']$:
$$
\exists w \, \forall z \,
\leftrightarrow \, \in z,w ~ \& \in z,d ~
\forall y \,\to \, \in y,c \in z,y\,.
$$
Since ~$\in d,c$~is provable in $[M'';\mathcal{L}'']$,
we conclude that
$$
\leftrightarrow ~ 
\& \in z,d ~
\forall y \,\to \, \in y,c \in z,y ~\,
\forall y \,\to \, \in y,c \in z,y
$$
is provable in $[M'';\mathcal{L}'']$. 
We see that
$$
\exists w \, \forall z \,
\leftrightarrow \, \in z,w ~ 
\forall y \,\to \, \in y,c \in z,y
$$
is provable in $[M'';\mathcal{L}'']$
and hence in $[M';\mathcal{L}']$. 
Due to the deduction theorem \cite[(4.3)]{Ku} the formula
$$
\to ~ \exists v \in v,c ~ \exists w \, \forall z \,
\leftrightarrow \, \in z,w ~ \forall y \,\to \, \in y,c \in z,y
$$
is provable in $[M_c;\mathcal{L}_c]$.
Now \cite[(4.9) Corollary]{Ku} allows the generalization of
the constant $c$ in the last formula,
which concludes the proof of the theorem.
\end{proof}

Now we extend RST=$[M^{(0)};{\mathcal L}^{(0)}]$ by the following steps.

\begin{itemize}
\item[E1.] $[M^{(1)};{\mathcal L}^{(1)}]$ results from 
$[M^{(0)};{\mathcal L}^{(0)}]$ if we add the constant symbol 
$\emptyset$ and the following axioms to $[M^{(0)};{\mathcal L}^{(0)}]$:
$$
\leftrightarrow ~ \sim u,\emptyset ~ \, \forall y \, \neg \in y,u\,.
$$
Here $u,y \in X$ range over all pairs of distinct variables.
\item[E2.] $[M^{(2)};{\mathcal L}^{(2)}]$ results from 
$[M^{(1)};{\mathcal L}^{(1)}]$ with ${\mathcal L}^{(2)}={\mathcal L}^{(1)}$ if we add the 2-ary predicate symbol 
$\subseteq$ and the following axioms to $[M^{(1)};{\mathcal L}^{(1)}]$:
$$
\leftrightarrow ~ \subseteq u,v ~ \, \forall y \, \to \, \in y,u \in y,v\,.
$$
Here $u,v,y \in X$ range over all triples of distinct variables.
\item[E3.] $[M^{(3)};{\mathcal L}^{(3)}]$ results from 
$[M^{(2)};{\mathcal L}^{(2)}]$ if we add the 1-ary function symbol 
$\cup$ and the following axioms to $[M^{(2)};{\mathcal L}^{(2)}]$:
$$
\leftrightarrow ~ \sim u,\cup(x) ~ \, 
\forall z \,\leftrightarrow \, \in z,u ~ \exists y \, \& \in z,y \in y,x\
$$
for all quadruples $u,x,y,z \in X$ of distinct variables.
\item[E4.] $[M^{(4)};{\mathcal L}^{(4)}]$ results from 
$[M^{(3)};{\mathcal L}^{(3)}]$ if we add the 1-ary function symbol 
$\sigma$ and the following axioms to $[M^{(3)};{\mathcal L}^{(3)}]$:
$$
\leftrightarrow ~ \sim u,\sigma(x) ~ \, 
\forall y \, \leftrightarrow \, \in y,u ~ \sim y,x 
$$
for all triples $u,x,y\in X$ of distinct variables.\\
Now $\sigma(x)$ denotes the set $\{x\}$.
\item[E5.] $[M^{(5)};{\mathcal L}^{(5)}]$ results from 
$[M^{(4)};{\mathcal L}^{(4)}]$ if we add the 2-ary function symbol 
$\sigma_2$ and the following axioms to $[M^{(4)};{\mathcal L}^{(4)}]$:
$$
\leftrightarrow ~ \sim u,\sigma_2(x\,y) ~ \, \forall z\,
\leftrightarrow \, \in z,u ~ \vee \, \sim z,x \sim z,y\,.
$$
Here $u,x,y,z\in X$ range over all quadruples of distinct variables.
Now $\sigma_2(xy)$ denotes the set $\{x,y\}$.\\

\noindent
\textit{We introduce the following abbreviation, which is  
not part of the formal language: We define the successor
$x^+=\cup(\sigma_2(x \,\sigma(x)))$ of $x$.}
\item[E6.] $[M^{(6)};{\mathcal L}^{(6)}]$ results from 
$[M^{(5)};{\mathcal L}^{(5)}]$ if we add the 1-ary function symbol 
${\mathcal P}$ and the following axioms to $[M^{(5)};{\mathcal L}^{(5)}]$:
$$
\leftrightarrow ~ \sim z,{\mathcal P}(x) ~ \, 
\forall v \, \leftrightarrow \, \in v,z ~ 
\forall w\,\to\,\in w,v \in w,x \,.
$$
Here $v,w,x,z\in X$ range over all quadruples of distinct variables.
\end{itemize}
We finally obtain the new system 
$\mbox{RST}_{ext}=[\,M^{(6)};{\mathcal L}^{(6)}\,]$ with\\
$M^{(6)}=[\,A^{(6)};P^{(6)};B^{(6)}\,]$. The symbols are 
$
A^{(6)}=[\,\emptyset;\cup;\sigma;\sigma_2;
\mathcal{P}\,]$ and
$P^{(6)}=[\,\in;\subseteq\,]$\,.

The axioms of $B^{(6)}$ are given in A1-A6 and E1-E6, and 
${\mathcal L}^{(6)}$ is the set of terms constructed
from the constant $\emptyset$, the 1-ary function symbols
$\cup$, $\sigma$, $\mathcal{P}$
and the 2-ary function symbol $\sigma_2$.
\begin{rem}\label{sfaxiom}
Using the formal system $\mbox{RST}_{ext}$ we can rewrite
the axioms A5 for subset-friendly sets in the slightly simpler form
\begin{equation*}
\begin{split}
\exists u \, ~&~\& \, \& \, \& \, \in x,u \\ 
~&\quad \forall y \, \rightarrow \, \in y,u ~ 
\subseteq y,u\\
~&\quad \forall y \, \rightarrow \, \in y,u ~\in \mathcal{P}(y),u\\
~&\quad \forall y ~ \rightarrow ~ \in y,u
~\forall z ~ \rightarrow ~ \in z,u\\
~&\quad\quad  \exists v ~ \& ~ \& \in v,u ~ \subseteq \sigma_2(yz),v\\
~&\quad\quad\quad \forall w \rightarrow \, \in w,v~\subseteq w,v  \,.
\end{split}
\end{equation*}
\end{rem}
The purpose of the following theorem is twofold.
First it shows that every formula provable in $\mbox{RST}_{ext}$ 
can be replaced by an equivalent formula in RST 
which is already provable in RST.
Secondly it says that all axioms for sets in ZFC
given in \cite[Chapter 9]{Shoenfield} are already provable in RST, apart from the replacement axioms.
\begin{thm}\label{nice_axioms}
$\mbox{RST}_{ext}$ is a sf-theory and a conservative extension
of RST. The following formulas are provable 
in RST and more gene\-rally in every sf-theory $[M;{\mathcal L}]$ 
for all collections of distinct variables $x,y,z,u,v,w \in X$.
\begin{itemize}
\item[(a)] Existence of the empty set. 
$$\exists u \, \forall y \, \leftrightarrow \, \in y,u \, 
\& \in y,x \, \neg \in y,x \quad \mbox{and} \quad
\exists u \, \forall y \,  \neg \in y,u$$
\item[(b)] Existence of unions. $\exists u \, \forall z
\leftrightarrow \, \in z,u ~ \exists y \, \& \in z,y \in y,x$
\item[(c)] Existence of pair sets.
$$\exists u \, \forall z
\leftrightarrow \, \in z,u ~ \vee \, \sim z,x \sim z,y$$
\item[(d)] Existence of power sets.
$$\exists z \, \forall v ~  \leftrightarrow ~ \in v,z \,
\forall w \, \rightarrow \, \in w,v \in w,x$$
\item[(e)] Existence of an inductive set.
\begin{align*}
& \exists x \, \& \, \exists y ~ \&  \in y,x \, 
  \forall z \, \neg \in z,y\\
& \quad \forall y \to \, \in y,x~\exists z \, \& \, \in z,x\\
& \quad \quad
\forall v \, \leftrightarrow \, \in v,z \, \vee \in v,y \sim v,y	\\
\end{align*}
\end{itemize}
\end{thm}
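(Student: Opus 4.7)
The plan is to prove statements (a)--(e) first in an arbitrary sf-theory $[M;\mathcal{L}]$, and then to use these existence results---together with extensionality supplying uniqueness via Lemma \ref{lem3}---to validate each of the six extension steps E1--E6 as a conservative sf-theory extension, so that $\mbox{RST}_{ext}$ arises from RST by a chain of such extensions.

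For the five existence formulas I would proceed as follows. For (a), Lemma \ref{lem1}(b) with $F=\neg\in y,x$ (noting $u\notin\mbox{free}(F)$) immediately gives the first formula; since $\& \in y,x\,\neg\in y,x$ is propositionally equivalent to $\bot$, the second formula $\exists u\,\forall y\,\neg\in y,u$ drops out by the equivalence theorem. For (b), given $x$ I would apply A5 to obtain a subset-friendly $U$ with $x\in U$; applying transitivity of $U$ twice (first on $x$, then on elements of $x$) shows that every witness $z$ of $\exists y(z\in y\wedge y\in x)$ already lies in $U$, which is exactly the bound demanded by Lemma \ref{lem4} with $F=\exists y\,\&\in z,y\in y,x$, yielding $\cup x$. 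For (c), pair $x$ and $y$ using A4 and then carve out $\{x,y\}$ using Lemma \ref{lem1}(b) with $F=\vee\sim z,x\sim z,y$. For (d), A5 gives a subset-friendly $U$ with $x\in U$; condition 3 of Definition \ref{interpret_axioms}(b) puts $\mathcal{P}[x]$ in $U$, transitivity then puts $\mathcal{P}[x]$ inside $U$, and Lemma \ref{lem1}(b) with $F=\forall w\to\in w,v\in w,x$ applied to $U$ extracts the power set. For (e), A5 applied to the empty set provided by (a) returns a subset-friendly $U$ with $\emptyset\in U$; by Remark \ref{sf_remark}(b), $U$ is closed under $y\mapsto\{y\}$ and $y\mapsto y\cup\{y\}$, so $U$ itself witnesses the inductive-set formula.

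For the extension side I would apply the conservativity machinery from Section \ref{FMS} six times in succession. E1 uses Theorem \ref{conserv1}(b) keyed to the empty-set statement (a); E2 uses Theorem \ref{conserv0} with the defining equivalence for $\subseteq$; E3, E5, E6 use Theorem \ref{conserv2}(b) keyed to (b), (c), and (d) respectively; E4 uses Theorem \ref{conserv2}(b) keyed to the singleton case of (c) with $x=y$. In each invocation the existence premise $\exists u\,G$ is the statement we just proved, and the uniqueness premise $\to G\to G\frac{v}{u}\sim u,v$ is read off from extensionality through Lemma \ref{lem3}. To retain the sf-theory property through each step I would adapt the argument of Lemma \ref{const_erw}: given an instance of the subset axiom schema whose formula $F$ contains the new symbol, replace each compound term involving that symbol by a fresh variable, carry the defining equivalence as a hypothesis so that the resulting formula lies in the previous language, apply the subset axiom at that previous stage, and then re-absorb the definitions by the substitution rule (c) from \cite{Ku}.

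The main technical obstacle is this last point. Lemma \ref{const_erw} handles only new constants; pushing the sf-theory property through E3--E6 requires the analogue for function symbols, where the candidate $F$ may contain compound terms $f(\lambda_1\ldots\lambda_n)$ built from $\mathcal{L}^{(k)}$ rather than plain variables. The elimination mechanism is the one already internal to Theorem \ref{conserv2}, but it has to be iterated carefully---respecting arities and avoiding variable capture---so as to reduce each new instance to one already available at the previous stage. Once this bookkeeping is completed, both assertions of the theorem (the sf-theory property, the conservativity) fall out, and the provability of (a)--(e) in every sf-theory is already secured by the first paragraph of the plan.
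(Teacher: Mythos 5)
Your proposal follows essentially the same route as the paper: prove the existence formulas (a)--(d) in an arbitrary sf-theory via Lemma \ref{lem1}(b), Lemma \ref{lem4} and the axioms A4/A5, obtain the uniqueness premises from extensionality via Lemma \ref{lem3}, feed these into Theorems \ref{conserv0}, \ref{conserv1}(b) and \ref{conserv2}(b) for the six extension steps E1--E6, and witness (e) by the subset-friendly set containing $\emptyset$, which is closed under successor. The one place you are noticeably lighter than the paper is (e): you cite the informal, semantic Remark \ref{sf_remark}(b), whereas the paper carries out the corresponding derivation formally inside $\mbox{RST}_{ext}$ (the steps S$_1$--S$_{15}$ with auxiliary constants $c$, $d$, the deduction theorem and generalization of constants, leading to \eqref{sf_folgerungen} and \eqref{stronger_exists}); it is that formal derivation, not the remark about actual sets, which establishes provability in RST, but the closure idea you invoke is exactly the one the paper formalizes.
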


\begin{proof}
To obtain the first part of the theorem 
we show for $j=1,\ldots,6$ that $[M^{(j)};{\mathcal L}^{(j)}]$ 
is a conservative extension of $[M^{(j-1)};{\mathcal L}^{(j-1)}]$.
For this purpose we make use of 
Theorems \ref{conserv0}, \ref{conserv1}(b) 
and \ref{conserv2}(b), which allows us
to replace formulas with new symbols step by step 
with equivalent formulas from the previous systems
by using the equivalence theorem from \cite[(3.17)(a)]{Ku},
the axioms of equality and the substitution rule (c).
We use \cite[(3.17)(b)]{Ku} for the replacement 
of bound variables to obtain the general formulation of
the axioms with diffe\-rent collections of variables.
Then we see that for $j=1,\ldots,6$ 
each conservative extension $[M^{(j)};{\mathcal L}^{(j)}]$ is a sf-theory.
We will see that the existence conditions
in Theorem \ref{conserv1}(b) and Theorem \ref{conserv2}(b) 
for the extensions $[M^{(j)};{\mathcal L}^{(j)}]$
with a new constant or function symbol 
are already provable in RST 
and hence in $[M^{(j-1)};{\mathcal L}^{(j-1)}]$.
In each case the corresponding uniqueness conditions 
will automatically result from Lemma \ref{lem3}.
Then the existence conditions (a)-(d) 
of the theorem are obtained as a by-product.
The proof of part (e) requires a little bit more effort.
\begin{itemize}
\item[E1.] Since RST=$[M^{(0)};{\mathcal L}^{(0)}]$ is a sf-theory,
we obtain from Lemma \ref{lem1}(b) that
$\exists u \, \forall y \, \leftrightarrow \, \in y,u \, 
\& \in y,x \, \neg \in y,x$
is provable in $[M^{(0)};{\mathcal L}^{(0)}]$.
Using that
$
\leftrightarrow \,
\leftrightarrow \, \in y,u \, 
\& \in y,x \, \neg \in y,x \,
\neg \in y,u
$
is an axiom of the propositional calculus, 
we obtain from the equivalence theorem \cite[(3.17)(a)]{Ku}
that the existence condition
$\exists u \, \forall y \,\neg \in y,u$ is provable in
$[M^{(0)};{\mathcal L}^{(0)}]$.
But the latter formula is equivalent to the first existence condition. 
We see that the two formulas in part (a) are provable in RST
and $\mbox{RST}_{ext}$.
\item[E2.] This extension has the desired properties due to
Theorem \ref{conserv0}.
\item[E3.] We obtain from A5 and Lemma \ref{lem2}(b) that
\begin{equation*}
\exists u  ~\& \, \in x,u 
~ \forall y \, \rightarrow \, \in y,u ~ 
\forall z \,  \rightarrow \, \in z,y \, \in z,u
\end{equation*}
is provable in RST.
We add a 1-ary function symbol $\lambda$ to RST and apply 
Theorem \ref{conserv2}(a) to the last formula. Now the following
formulas are provable in a conservative extension $\mbox{RST}'$ of RST:
\begin{itemize}
\item[1.] $\& \, \in x,\lambda(x) 
~ \forall y \, \rightarrow \, \in y,\lambda(x)  ~ 
\forall z \,  \rightarrow \, \in z,y \, \in z,\lambda(x)$
\item[2.] $\in x,\lambda(x)$ 
\item[3.] $\forall y \, \rightarrow \, \in y,\lambda(x)  ~ 
\forall z \,  \rightarrow \, \in z,y \, \in z,\lambda(x)$
\item[4.] $\rightarrow \, \in y,\lambda(x)  ~ 
\forall z \,  \rightarrow \, \in z,y \, \in z,\lambda(x)$
\item[5.] $\rightarrow \, \in x,\lambda(x)  ~ 
\forall z \,  \rightarrow \, \in z,x \, \in z,\lambda(x)$
\item[6.] $\forall z \,  \rightarrow \, \in z,x \, \in z,\lambda(x)$
\item[7.] $\rightarrow \, \in y,x \, \in y,\lambda(x)$
\item[8.] $\rightarrow \, \in y,\lambda(x)  ~ 
\rightarrow \, \in z,y \, \in z,\lambda(x)$ ~(4. and \cite[(3.16)(c)]{Ku})
\item[9.] $\rightarrow \, \& \in z,y \in y,x ~ 
\in z,\lambda(x)$ ~(7. and 8.)
\item[10.] $\forall z \,\forall y \,\rightarrow \, \& \in z,y \in y,x ~ 
\in z,\lambda(x)$ ~(9.)
\item[11.] $\forall z \,\rightarrow \, \exists y \,\& \in z,y \in y,x ~ 
\in z,\lambda(x)$ ~ (10. and \cite[(3.18)(18)]{Ku})
\item[12.] $\exists u \forall z \,\rightarrow \, 
\exists y \,\& \in z,y \in y,x ~ 
\in z,u$~ (11. and \cite[(3.19)]{Ku})
\end{itemize}
Since $\mbox{RST}'$ is a conservative extension of RST, the last formula 
is already provable in RST. Now we can apply Lemma \ref{lem4}
and obtain that the formula (b) of the theorem 
for the existence of unions is provable in RST.
\item[E4.] The following formulas are provable in RST:
\begin{itemize}
\item[1.] $\exists u \, \& \, \in x,u\, \in y,u$ ~ from A4
\item[2.] $\exists u \, \in x,u$ ~ (from 1.)
\item[3.] $\to \, \in x,u \, \to \, \sim y,x \in y,u$ 
\item[4.] $\forall y \, \to \, \in x,u \, \to \, \sim y,x \in y,u$ 
\item[5.] $\to \, \in x,u \, \forall y \, \to \, \sim y,x \in y,u$ ~
(4. and \cite[(3.18)(20)]{Ku})
\item[6.] $\to \exists u \, \in x,u ~
\exists u \forall y \to \, \sim y,x \in y,u$ ~ 
(5. and Lemma \ref{lem2}(b))
\item[7.] $\exists u \forall y \to \, \sim y,x \in y,u$ ~ 
(2. and 6.)
\item[8.] $\exists u \forall y \leftrightarrow \,\in y,u \, \sim y,x $ ~ 
(7. and Lemma \ref{lem4}).
\end{itemize}
\item[E5.] The following formulas are provable in RST:
\begin{itemize}
\item[1.] $\exists u \, \& \, \in x,u\, \in y,u$ ~ from A4
\item[2.] $\to \, \& \, \in x,u\, \in y,u~
 \to \, \vee \sim z,x \sim z,y \in z,u$
\item[3.] $\to \, \& \, \in x,u\, \in y,u~
\forall z \to \, \vee \sim z,x \sim z,y \in z,u$ \\
(2. and \cite[(3.18)(20)]{Ku})
\item[4.] $\to \exists u \, \& \, \in x,u\, \in y,u~
\exists u \forall z \to \, \vee \sim z,x \sim z,y \in z,u$\\
(3. and Lemma \ref{lem2}(b))
\item[5.] $\exists u \forall z \to \, \vee \sim z,x \sim z,y \in z,u$~
(1. and 4.)
\item[6.] $\exists u \forall z 
\leftrightarrow \,\in z,u \,\vee \sim z,x \sim z,y$ ~ 
(5. and Lemma \ref{lem4}).\\
The last formula is formula (c) in the theorem.
\end{itemize}
\item[ E6.]We obtain from A5 and Lemma \ref{lem2}(b) that
\begin{align*}
\exists u  ~\& \, \in x,u ~
\forall y \, \rightarrow \, \in y,u ~
\exists z \, \&\,\in z,u\\
\qquad \forall v ~  \leftrightarrow ~ \in v,z \,
\forall w \, \rightarrow \, \in w,v \in w,y
\end{align*}
is provable in RST.
We add a 1-ary function symbol $\mu$ to RST and apply 
Theorem \ref{conserv2}(a) to the last formula.
The following formulas are provable in a 
conservative extension $\mbox{RST}''$ of RST:
\begin{align*}
\& \, \in x,\mu(x) ~
\forall y \, \rightarrow \, \in y,\mu(x)  ~
\exists z \, \&\,\in z,\mu(x) \\
\qquad \forall v ~  \leftrightarrow ~ \in v,z \,
\forall w \, \rightarrow \, \in w,v \in w,y
\end{align*}
and from $\in x,\mu(x)$ the two formulas
\begin{align*}
\exists z \, \&\,\in z,\mu(x) 
~ \forall v ~  \leftrightarrow ~ \in v,z \,
\forall w \, \rightarrow \, \in w,v \in w,x\,,
\end{align*}
\begin{align*}
\exists z \, \forall v ~  \leftrightarrow ~ \in v,z \,
\forall w \, \rightarrow \, \in w,v \in w,x\,.
\end{align*}
\end{itemize}
The last formula is already provable in RST.
It is formula (d) in the theorem.
Hence it remains to show that formula (e) in the theorem
is provable in RST. For this purpose we use Remark \ref{sfaxiom}
and the formal system $\mbox{RST}_{ext}$.
Then we will not explicitely mention the use of axioms (E1)-(E6).
Let $[\mathcal{M}';\mathcal{L}']$ result from $\mbox{RST}_{ext}$
by adding a new constant symbol $c$.
Let $[\mathcal{M}'';\mathcal{L}'']$ 
result from $[\mathcal{M}';\mathcal{L}']$ 
by adding a new constant symbol $d$.
Finally, let $[\mathcal{M}''';\mathcal{L}''']$ 
result from $[\mathcal{M}'';\mathcal{L}'']$ 
by adding the new basis axiom $\& \& \& F_{1}''F_{2}''F_{3}''F_{4}''$,
where the formulas $F_{1}'', F_{2}'', F_{3}'', F_{4}''$
are given by the following abbreviations
for a collection $v,w,y,z \in X$ of distinct variables:
\begin{equation*}
\begin{split}
F_{1}'' =\, ~&~\in c,d \,,\\ 
F_{2}'' =~&\quad \forall y \, \rightarrow \, \in y,d ~ 
\subseteq y,d\,,\\
F_{3}'' =~&\quad \forall y \, \rightarrow \, \in y,d ~
\in \mathcal{P}(y),d\,,\\
F_{4}'' =~&\quad \forall y ~ \rightarrow ~ \in y,d
~\forall z ~ \rightarrow ~ \in z,d\\
~&\quad\quad  \exists v ~ \& ~ \& \in v,d ~ \subseteq \sigma_2(yz),v\\
~&\quad\quad\quad \forall w \rightarrow \, \in w,v~\subseteq w,v  \,.
\end{split}
\end{equation*}
Then the following formulas are provable 
in $[\mathcal{M}''';\mathcal{L}''']$:
\begin{itemize}
\item[S$_1$.] $\in c,d$\,, 
\item[S$_2$.] $\rightarrow \, \in y,d ~ 
\subseteq y,d$\,,
\item[S$_3$.]
$\rightarrow \, \in y,d 
~\in \mathcal{P}(y),d$\,,
\item[S$_4$.]
\begin{align*}
\rightarrow ~ \in y,d ~\rightarrow ~ \in z,d~
\exists v ~ \& ~ \& \in v,d \\ 
\subseteq \sigma_2(yz),v
~\forall w \rightarrow \, \in w,v~\subseteq w,v  \,.
\end{align*}
\end{itemize}
With given new distinct variables $t,y'$ the following formulas are also provable in $[\mathcal{M}''';\mathcal{L}''']$:
\begin{itemize}
\item[S$_5$.] $\to \, \subseteq y',y \,\in y',\mathcal{P}(y)$\,,
\item[S$_6$.] $\to \, \in \mathcal{P}(y),d \,\subseteq \mathcal{P}(y),d$
(from S$_2$)\,,
\item[S$_7$.] $\to \, \subseteq y',y \,\to \, \in y,d \in y',d$
(from S$_3$, S$_5$ and S$_6$)\,,
\item[S$_8$.] $\forall t \, \to \, \subseteq \sigma_2(yz),t \,\to \, 
\in t,d \in \sigma_2(yz),d$
(from S$_7$)\,,
\item[S$_9$.] $\rightarrow ~ \in y,d ~\rightarrow ~ 
\in z,d~\in \sigma_2(yz),d$
(from S$_4$ and S$_8$)\,,
\item[S$_{10}$.] 
\begin{align*}
\hspace{-0.3cm}
\rightarrow ~ \in y,d ~\rightarrow ~ \in z,d~
\exists v ~ \& ~ \& \in v,d \\ 
\subseteq \cup(\sigma_2(yz)),v
~\forall w \rightarrow \, \in w,v~\subseteq w,v  
\end{align*}
(from S$_4$).
\item[S$_{11}$.] $\forall t \, \to \, 
\subseteq \cup(\sigma_2(yz)),t \,\to \, 
\in t,d \in \cup(\sigma_2(yz)),d$
(from S$_7$)\,,
\item[S$_{12}$.] $\rightarrow ~ \in y,d ~\rightarrow ~ 
\in z,d~\in \cup(\sigma_2(yz)),d$
(from S$_{10}$ and S$_{11}$)\,,
\item[S$_{13}$.] $\forall y\, \rightarrow ~\in y,d ~
\forall z\, \rightarrow ~ \in z,d~\in \sigma_2(yz),d$
(from S$_9$)\,,
\item[S$_{14}$.] $\forall y\, \rightarrow ~\in y,d ~
\forall z\, \rightarrow ~ \in z,d~\in \cup(\sigma_2(yz)),d$
(from S$_{12}$)\,.
\item[S$_{15}$.] $\forall y'\, \forall y\,
\to \, \subseteq y',y \,\to \, \in y,d \in y',d$
(from S$_7$)\,.
\end{itemize}
For $j=1,\ldots,15$ we denote the formula in $S_j$ by $G_{j}''$, 
and we form $G_{j}'$ from $G_{j}''$ by replacing everywhere in
$G_{j}''$ the constant $d$ with a new variable $u$.
For $k=1,\ldots,4$ let $F_{k}'$ result from $F_{k}''$ 
if we replace everywhere in $F_{k}''$ the constant $d$ by $u$. 
We obtain from the deduction theorem \cite[(4.3)]{Ku}
that the formula
\begin{align*}
\to\, \& \& \& F_{1}''F_{2}''F_{3}''F_{4}''~
\& \&  G_{13}''G_{14}''G_{15}''
\end{align*}
is provable in $[\mathcal{M}'';\mathcal{L}'']$,
and from the generalization of the constant symbols $d$
with the variable $u$ that the formula
\begin{align*}
\to\, \& \& \& F_{1}'F_{2}'F_{3}'F_{4}'~
\& \&  G_{13}'G_{14}'G_{15}'
\end{align*}
is provable in $[\mathcal{M}';\mathcal{L}']$.
We form $G_{j}$ from $G_{j}'$ by replacing everywhere in
$G_{j}'$ the constant $c$ with a new variable $x$.
Let $F_{k}$ result from $F_{k}'$ if we replace everywhere in $F_{k}'$ the constant $c$ with $x$. Here it is only affecting $F_{1}'$.
From the generalization of the constant $c$ we see
that the formulas
\begin{align*}
\to\, \& \& \& F_{1}F_{2}F_{3}F_{4}~
\& \&  G_{13}G_{14}G_{15}
\end{align*}
and
\begin{align*}
\forall u\,\to\, \& \& \& F_{1}F_{2}F_{3}F_{4}~
\& \&  G_{13}G_{14}G_{15}
\end{align*}
are provable in $\mbox{RST}_{ext}$. We have
\begin{equation}\label{fabk}
\begin{split}
F_{1} =\, ~&~\in x,u \,,\\ 
F_{2} =~&\quad \forall y \, \rightarrow \, \in y,u ~ 
\subseteq y,u\,,\\
F_{3} =~&\quad \forall y \, \rightarrow \, \in y,u ~
\in \mathcal{P}(y),u\,,\\
F_{4} =~&\quad \forall y ~ \rightarrow ~ \in y,u
~\forall z ~ \rightarrow ~ \in z,u\\
~&\quad\quad  \exists v ~ \& ~ \& \in v,u ~ \subseteq \sigma_2(yz),v\\
~&\quad\quad\quad \forall w \rightarrow \, \in w,v~\subseteq w,v  
\end{split}
\end{equation}
and 
\begin{equation}\label{gabk}
\begin{split}
G_{13} = ~&\forall y\, \rightarrow ~\in y,u ~
\forall z\, \rightarrow ~ \in z,u~\in \sigma_2(yz),u\,,\\ 
G_{14} = ~&\forall y\, \rightarrow ~\in y,u ~
\forall z\, \rightarrow ~ \in z,u~\in \cup(\sigma_2(yz)),u\,,\\
G_{15} =~&\forall y'\, \forall y\,
\to \, \subseteq y',y \,\to \, \in y,u \in y',u\,.
\end{split}
\end{equation}
We see that $\exists \, u\,\& \& \& F_{1}F_{2}F_{3}F_{4}$
is the formula in Remark \ref{sfaxiom} and that
\begin{equation}\label{sf_folgerungen}
\exists \, u\,\& \& \& F_{1}F_{2}F_{3}F_{4} ~ \mbox{and}~
\forall u\,\to\, \& \& \& F_{1}F_{2}F_{3}F_{4}~
\& \&  G_{13}G_{14}G_{15}
\end{equation}
are both provable in $\mbox{RST}_{ext}$.
Let $F$ and $G$ be formulas in $\mbox{RST}_{ext}$,
and assume that $\exists u\,F$ as well as $\forall u\,\to\,FG$
are both provable in $\mbox{RST}_{ext}$. Then the following formulas are
provable in $\mbox{RST}_{ext}$ as well:
$\to\,FG$, $\to\,F \&FG$ and $\to\,\exists u\,F ~ \exists u\,\&FG$ 
from Lemma \ref{lem2}(b). We obtain that
\begin{equation}\label{stronger_exists}
\left\{\begin{split}
\exists u\,F \in \Pi(\mbox{RST}_{ext})\quad\mbox{and} \quad
\forall u\,\to\,FG \in \Pi(\mbox{RST}_{ext}) \\
\implies \exists u\,\&FG \in \Pi(\mbox{RST}_{ext})\,.\qquad\qquad
\end{split}
\right.
\end{equation}
We can apply \eqref{stronger_exists} to \eqref{sf_folgerungen}
to strengthen the existence condition for the formula
in Remark \ref{sfaxiom}.
If we put $x=\emptyset$ in the formulas \eqref{sf_folgerungen}, then
we obtain the existence of an inductive set
from the formal definition of the successor
$y^+=\cup(\sigma_2(y \,\sigma(y)))=\cup(\sigma_2(y \,\sigma_2(yy)))$.
\end{proof}
\begin{rem}\label{sfrich}
We can also introduce the following abbreviations, which are not part of the formal language: For $n \geq 2$ we put
$$\sigma(x_1 \ldots x_n)=
\cup(\sigma_2(\sigma(x_1 \ldots x_{n-1})\sigma(x_n)))\,.$$
Then $\sigma(x_1 \ldots x_n)$ denotes the set 
$\{x_1, \ldots ,x_n\}$ and $\cup(\sigma(x_1 \ldots x_n))$ the set
$x_1 \cup \ldots \cup x_n$ for all $n \in \setN$, respectively.
If we use $\langle x\,y \rangle$ 
as abbreviation for the ordered pair 
$\sigma_2(\sigma(x) \sigma_2(xy))=\sigma(\sigma(x) \sigma(xy))$, 
and more generally $\langle x_1 \ldots x_n \rangle = 
\langle \langle x_1 \ldots x_{n-1} \rangle x_n \rangle$ for $n \geq 3$,
then we can easily form cartesian product sets.

Any subset-friendly set $U$ satisfies the following closure properties:
\begin{itemize}
\item If $A \in U$, then $\cup(A) \in U$,
$\mathcal{TC}(A) \in U$ and $\mathcal{P}(A) \in U$,
\item If $A \subseteq V$ and $V \in U$, then $A \in U$,
\item If $A, B \in U$, then $A \cup B \in U$, $A \cap B \in U$
and $A \setminus B \in U$,
\item If $A_1, \ldots, A_n \in U$, then $\{ A_1, \ldots, A_n\} \in U$,
\item If $A_1, \ldots, A_n \in U$, 
then $A_1 \times \ldots \times A_n \in U$.
\end{itemize}
All these closure properties of the
subset-friendly sets $U$ are formally provable in RST.
This results from Theorem \ref{nice_axioms} and from
the prova\-bility of the formulas \eqref{sf_folgerungen} 
in $\mbox{RST}_{ext}$, see \eqref{fabk}, \eqref{gabk} 
and \eqref{stronger_exists} in the proof 
of Theorem \ref{nice_axioms}. We also see that Theorem \ref{tcthm2}(a)
can be formalized immediately in RST.
From Theorem \ref{durchschnitt} and Theorem \ref{nice_axioms}
we can also prove in RST that there is a smallest inductive set 
called $\omega$. Thus RST enables formal induction with respect to $\omega$
and the introduction of arithmetic operations for $\setN_0$,
$\setZ$, $\setQ$, $\setR$. We see that a considerable part of mathematics
can be formalized in RST.
\end{rem}

\section{Models for RST}\label{RST_Model}
To obtain models for RST we make free use 
of the intuitive notion of a set
as we did it in Section \ref{construct}.
We also accept the principles of regularity and choice
in the informal mathematical argumentation.
But all the set constructions we use can be
formalized in ZFC.
We recall \eqref{tcsp} and define
\begin{equation} \label{unisets}
\left\{ \begin{split}
	V_{0,0} & =\mathcal{SP}(\emptyset),&\\
	V_{n,k} &=\mathcal{SP}(V_{n,k-1}) ~&
	\textrm{for} ~ n \in \setN_0 ~\textrm{and} ~ k \in \setN,\\	
	\mathcal{U}_{n} &= \bigcup_{k=0}^{\infty} V_{n,k} ~&
	\textrm{for} ~ n \in \setN_0,\quad\quad\quad\quad\quad\\
	V_{n,0}&=\mathcal{SP}(\mathcal{U}_{n-1}) ~& 
	\textrm{for} ~ n \in \setN.\quad \quad\quad\quad\quad\\
	\end{split}
	\right.
\end{equation}
It follows from Theorem \ref{spthm1} by complete induction
that $V_{n,k}$ and $\mathcal{U}_{n}$ are subset-friendly sets
for all $n, k \in \setN_0$. Note that
$$
\mathcal{U}_{0} \in \mathcal{U}_{1} \in \mathcal{U}_{2}
\in \ldots \quad \mbox{and hence} \quad
\mathcal{U}_{0} \subset \mathcal{U}_{1} \subset \mathcal{U}_{2}
\subset \ldots
$$
from the transitivity of the sets $\mathcal{U}_{n}$
for $n \in \setN_0$ and the regularity principle.
\begin{thm}\label{thm_models}
For any fixed $n \in \setN_0$ the set $\mathcal{U}_{n}$
is the universe of a model for RST with the individuals
$A \in \mathcal{U}_{n}$  and with the true membership relation between these individuals. We call it the $\mathcal{U}_{n}$-model for short.
\end{thm}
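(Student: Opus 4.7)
The plan is to verify each axiom A1--A6 of RST in the structure with universe $\mathcal{U}_n$ and true membership. The two workhorses are (i) that $\mathcal{U}_n$ is itself subset-friendly and in particular transitive --- which follows by induction on the stratification \eqref{unisets} together with Theorem \ref{spthm1} --- and (ii) the finer fact that every $A\in\mathcal{U}_n$ already lies inside some subset-friendly level $V_{n,k}$, so closure operations can be carried out at that level.

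First I would dispose of the easy axioms. For (A1), transitivity of $\mathcal{U}_n$ means that two individuals $A,B\in\mathcal{U}_n$ with the same members in $\mathcal{U}_n$ have the same true members, so true extensionality yields $A=B$. For (A4), given $A,B\in\mathcal{U}_n$, take $k$ with $A,B\in V_{n,k}$; property~4 of subset-friendliness applied inside $V_{n,k}$ supplies the required witness $V\in V_{n,k}\subseteq\mathcal{U}_n$ (and Remark~\ref{sf_remark}(b) even gives $\{A,B\}\in\mathcal{U}_n$). For (A3), given nonempty $U\in\mathcal{U}_n$, true regularity produces $Y\in U$ with $U\cap Y=\emptyset$, and transitivity gives $Y\in\mathcal{U}_n$. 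For (A2), the set $\{y\in A:F(y)\text{ holds in }\mathcal{U}_n\}$ is a subset of $A$; choosing $k$ with $A\in V_{n,k}$ we have $\mathcal{P}[A]\in V_{n,k}\subseteq\mathcal{U}_n$, so by transitivity every subset of $A$ is in $\mathcal{U}_n$. For (A6), a true choice set $Y$ for $U\in V_{n,k}$ satisfies $Y\subseteq\cup[U]\subseteq V_{n,k}$ (transitivity of $V_{n,k}$ twice), hence $Y\in\mathcal{P}[V_{n,k}]\subseteq V_{n,k+1}\subseteq\mathcal{U}_n$.

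The axiom that really needs (A5) care is itself. Given $A\in\mathcal{U}_n$, Theorem \ref{spthm1} produces the subset-friendly set $W:=\mathcal{SP}[\mathcal{TC}[A]]$ with $A\in W$. Pick $k$ with $A\in V_{n,k}$. Since $V_{n,k}$ is subset-friendly and contains $A$, the minimality clause of Theorem \ref{spthm1} gives $W\subseteq V_{n,k}$, whence $W\in\mathcal{P}[V_{n,k}]\subseteq V_{n,k+1}\subseteq\mathcal{U}_n$. So $W$ is a genuine element of the universe.

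The main obstacle, and the step I would write out carefully, is the absoluteness check: the formal statements inside (A5) (and implicitly those inside (A1)--(A4), (A6) as well) must, when interpreted in $\mathcal{U}_n$, say the true thing about the chosen witnesses. Since $W\in\mathcal{U}_n$ is transitive, $W\subseteq\mathcal{U}_n$, so every bounded quantifier $\forall y\in W$, $\exists y\in W$ ranges over the same individuals externally and internally. For any $Y\in W$ the true power set $\mathcal{P}[Y]$ coincides with the power set as seen in the model, because $Y\in V_{n,k}$ gives every true subset of $Y$ as an element of $\mathcal{P}[V_{n,k}]\subseteq\mathcal{U}_n$; hence clause~3 of Definition \ref{interpret_axioms}(b) translates faithfully. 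The analogous analysis handles clause~4 (any $Y,Z\in W$ lie in a common $V_{n,j}$, providing a true transitive $V\in W$ with $\{Y,Z\}\subseteq V$ that also looks transitive in the model) and clauses~1--2 (emptiness and transitivity are absolute downwards). Once this absoluteness is spelled out uniformly, (A5) holds with witness $W$, and the theorem follows.
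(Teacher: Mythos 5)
Your proposal is correct and follows essentially the same route as the paper: each axiom (A1)--(A6) is verified semantically using the transitivity of $\mathcal{U}_n$, the fact that every individual lies in some subset-friendly level $V_{n,k}$, and the absoluteness of the bounded clauses over a transitive witness. The only cosmetic difference is your witness for (A5) — you take $\mathcal{SP}[\mathcal{TC}[A]]$ and invoke the minimality clause of Theorem \ref{spthm1} to place it in $\mathcal{U}_n$, whereas the paper simply uses the level $V_{n,j}$ containing $A$ directly; both work.
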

\begin{proof}
The logical axioms \cite[(3.9),(3.10),(3.11)]{Ku} are generally valid
and rules \cite[(3.13)(a)(b)(c)(d)]{Ku} correspond to correct method 
of deduction. Hence it is sufficient to check that axioms (A1)-(A6)
are valid in the $\mathcal{U}_{n}$-model.
To each member $A \in \mathcal{U}_{n}$
we choose exactely one name $\alpha_A$, put
$W_n=\{\alpha_A \,:\, A \in \mathcal{U}_{n}\}$ and add all
these constant symbols to RST. We denote the resulting
formal mathematical system by RST$_n$.
We define $\mathcal{D}: W_n \to \mathcal{U}_{n}$ by
$\mathcal{D}(\alpha_A)=A$ and extend $\mathcal{D}$
in order to assign a truth value $\top$ or $\bot$
to all closed formulas of RST$_n$ as follows.
\begin{itemize}
\item For all sets $A, B \in \mathcal{U}_{n}$ we put 
$\mathcal{D}(\sim \alpha_A, \alpha_B)=\top$
iff $A=B$ as well as $\mathcal{D}(\in \alpha_A, \alpha_B)=\top$
iff $A \in B$.
\item $\mathcal{D}(\neg F)=\top$ iff $\mathcal{D}(F)=\bot$
for all closed formulas $F$ in RST$_n$.
\item $\mathcal{D}(\to F G)=\top$ iff 
$\mathcal{D}(F) \implies \mathcal{D}(G)$,
similarly for $\leftrightarrow$, $\&$ and $\vee$.
Here $F, G$ are any closed formulas in RST$_n$.
\item $\mathcal{D}(\forall x F)=\top$ iff 
$\mathcal{D}(F\frac{\alpha_A}{x}) = \top$ for all $A \in \mathcal{U}_{n}$.
Here $F$ is any formula in RST$_n$ with $\mbox{free}(F)\subseteq \{x\}$.
\item $\mathcal{D}(\exists x F)=\top$ 
iff there exists $A \in \mathcal{U}_{n}$ with
$\mathcal{D}(F\frac{\alpha_A}{x}) = \top$.
Here $F$ is any formula in RST$_n$ with $\mbox{free}(F)\subseteq \{x\}$.
\end{itemize}
\noindent
Let $F$ be a formula in RST$_n$ with $\mbox{free}(F)\subseteq \{x_1,\ldots,x_k\}$ and let
$k \in \setN$. Then we say that $F$ is valid in
$\mathcal{D}$ iff 
$\mathcal{D}(F\frac{\alpha_{A_1}}{x_1}
\ldots\frac{\alpha_{A_k}}{x_k})=\top$ 
for all $A_1,\ldots,A_k \in \mathcal{U}_n$,
see also Remark \ref{axiomsremark}.
Now we prove that $\mathcal{D}$ is the desired $\mathcal{U}_{n}$-model.
\begin{itemize}
\item[A1.] The extensionality axioms are valid in $\mathcal{D}$ 
since $\mathcal{U}_n$ is a transitive set: 
Let $U, V \in \mathcal{U}_{n}$ be any two sets\,.
Then
$$\mathcal{D}(\to ~ \forall y \, \leftrightarrow \, \in y,\alpha_U \, 
\in y,\alpha_V ~ \sim \alpha_U,\alpha_V)=\top$$
because we have for all sets $Y$ that
$Y \in U$ is equivalent to\\ 
$Y \in U \,\&\,Y \in \mathcal{U}_{n}$
and $Y \in V$ is equivalent 
to $Y \in V \,\&\, Y \in \mathcal{U}_{n}$.
\item[A2.] Let $u,x,y$ be distinct variables and $F$
be a formula in RST$_n$ with $u,x \notin \text{var}(F)$\,.
We replace all free variables in $F$ other than $y$
by arbitrary constants in $W_n$ and obtain a formula $G$ in RST$_n$ 
with $u,x \notin \text{var}(G)$ and $\mbox{free}(G)\subseteq \{y\}$\,.
We have to show for all $A \in \mathcal{U}_n$ that
$\mathcal{D}(\exists u \, \forall y \, 
\leftrightarrow \, \in y,u \, \& \in y,\alpha_A \, G)=\top\,.$
We define the set
$U = \{Y \in A\,:\,\mathcal{D}(G\frac{\alpha_Y}{y})=\top\,\}\,.$
From $U \subseteq A$ and $A \in \mathcal{U}_n$ 
with the subset-friendly set $\mathcal{U}_n$ we obtain that
$U \in \mathcal{U}_n$, see also Remark \ref{sf_remark}(b). 
Hence we can form the name $\alpha_U \in W_n$ and obtain that
$\mathcal{D}(\forall y \, 
\leftrightarrow \, \in y,\alpha_U \, \& \in y,\alpha_A \, G)=\top\,.$
Therefore the existence condition $\exists u \, \forall y \, 
\leftrightarrow \, \in y,u \, \& \in y,\alpha_A \, G$
is true in $\mathcal{D}$ as well.
\item[A3.] Here we prescribe any nonempty set $U \in \mathcal{U}_n$. 
Then we have $\mathcal{D}(\exists y \, \in y,\alpha_U)=\top$
from the transitivity of $\mathcal{U}_n$. 
From the regularity principle we have a set $Y$ with $Y \in U$
and $U \cap Y = \emptyset$. From $Y \in U \in \mathcal{U}_n$
with the transitive set $\mathcal{U}_n$ we have $\alpha_Y \in W_n$
and obtain that
$\mathcal{D}(\& \in \alpha_Y,\alpha_U ~\, 
\neg \, \exists z \,  \& \, \in z,\alpha_U \, \in z,\alpha_Y)=\top$\,. 
Now we see 
$$\qquad \mathcal{D}(\rightarrow \, \exists y \, \in y,\alpha_U \, 
\exists y \,\, \& \in y,\alpha_U ~ 
\neg \, \exists z \,  \& \, \in z,\alpha_U \, \in z,y)=\top\,.$$
\item[A4.] We have to show for all $A, Y \in \mathcal{U}_n$ that
$$\mathcal{D}(\exists u \,\, \& \in \alpha_A,u \in \alpha_Y,u)=\top\,.$$
We have indices $j, k \in \setN_0$ with
$A \in V_{n,j}$ and $Y \in V_{n,k}$. For $m = \max(j,k)$ we see
$A,Y \in V_{n,m} \subseteq \mathcal{U}_n$. From $U=V_{n,m}$ we can form the name $\alpha_U \in W_n$ of $V_{n,m}$ and conclude that
$$\mathcal{D}(\& \in \alpha_A,\alpha_U \in \alpha_Y,\alpha_U)=\top\,.$$
We see that the desired existence condition is true in $\mathcal{D}$ as well.
\item[A5.] For all $A \in \mathcal{U}_n$ we have 
$A \in V_{n,j} \in \mathcal{U}_n$ for some index $j \in \setN_0$ 
with the subset-friendly set $U=V_{n,j}$. This implies
\begin{equation*}
\mathcal{D}(\in \alpha_A,\alpha_U)=\top\,.
\end{equation*}
For all sets $Y \in U$ the conditions
$Z \in Y$ and $Z \in Y \cap \mathcal{U}_n$
are equivalent, and therefore
\begin{equation*}
\mathcal{D}(\forall y \, \rightarrow \, \in y,\alpha_U ~ 
\forall z \,  \rightarrow \, \in z,y \, \in z,\alpha_U)=\top\,.
\end{equation*}
Similarly we obtain from the properties 3. and 4.
in Definition \ref{interpret_axioms}(b) for the subset-friendly 
set $U \in \mathcal{U}_n$ that
\begin{equation*}
\begin{split}
~&\quad \mathcal{D}(\forall y \, \rightarrow \, \in y,\alpha_U ~
\exists z \, \&\,\in z,\alpha_U\\
~&\quad\quad  \forall v ~  
\leftrightarrow ~ \in v,z ~ \forall w \, 
\rightarrow \, \in w,v \in w,y)=\top\,, 
\end{split}
\end{equation*}
\begin{equation*}
\begin{split}
~&\quad \mathcal{D}(\forall y ~ \rightarrow ~ \in y,\alpha_U
~\forall z ~ \rightarrow ~ \in z,\alpha_U\\
~&\quad\quad  \exists v ~ \& ~ \& \in v,\alpha_U ~ \& \in y,v \in z,v\\
~&\quad\quad\quad \forall w \rightarrow \, \in w,v~
~\forall t \rightarrow \, \in t,w \in t,v )=\top\,. 
\end{split}
\end{equation*}
We see that axioms A5 are true in $\mathcal{D}$.
\item[A6.] Let $U \in \mathcal{U}_n$ be a set 
which has only nonempty and pairwise disjoint elements.
Using the transitivity of $\mathcal{U}_n$ we obtain from our assumptions
\begin{align*}
\mathcal{D}(\forall x \, \to\,\in x,\alpha_U ~ \exists w \in w,x)=\top
\end{align*}
as well as
\begin{align*}
&~ \, \mathcal{D}(\forall x \forall y \, \to\,\in x,\alpha_U ~ \to\, \in y,\alpha_U\\
&\qquad \qquad \to\, \exists w \,\&\,\in w,x\, \in w,y ~ \sim x,y)=\top\,.
\end{align*}

From the principle of choice we can find a set $Y'$ such that
$Y' \cap A$ has exactely one element $V(A) \in A$ for all $A \in U$.
Let $Y = \{V(A)\,:\,A \in U\}$. 
Then $Y \subseteq \cup(U) \in \mathcal{U}_n$ 
and hence $Y \in \mathcal{U}_n$ 
with $Y \cap A =\{V(A)\}$ for all $A \in U$.
We see that all sets involved other than $Y'$
are members of $\mathcal{U}_n$ and that
\begin{align*}
&\qquad \mathcal{D}(\exists y \, 
\forall x \,\to\,\in x,\alpha_U\\
&\qquad \qquad \quad \exists v \, \& \& \in v,x \in v,y\\
&\qquad \qquad \quad \quad \forall w \, \to \& \in w,x \in w,y 
~ \sim v,w)=\top\,.
\end{align*}
Therefore axioms A6 are true in $\mathcal{D}$.
\end{itemize}
\end{proof}

Due to Shoenfield \cite[Chapter 9.3]{Shoenfield} we say
that a set $\alpha$ is an ordinal if $\alpha$ is transitive and 
if every member of $\alpha$ is transitive.
We also mention the valuable lecture notes of Skolem \cite[Section 8]{Skolem}. 
There one can find the following facts about ordinals which we will use now:
\begin{itemize}
\item Members of ordinals are again ordinals.
\item If $\alpha$ is an ordinal then it is well-ordered by 
$\in$, i.e.\ for $\beta, \gamma \in \alpha$ with $\beta \neq \gamma$
we have either $\beta \in \gamma$ or $\gamma \in \beta$, and we do not have
an infinite sequence \eqref{forbidden_sequence} with members
$A_0, A_1, A_2,\ldots$ of $\alpha$.
\item Using transitivity we obtain for all \textit{ordinals} 
$\alpha$, $\beta$ that the conditions
$\beta \subseteq \alpha$ and 
[$\beta \in \alpha$ or $\beta =\alpha$]
are equivalent.
\end{itemize}

\begin{lem}\label{ordlem1}
Let $T$ be a transitive set and define its subset
$$
\gamma=\{\,\beta \in T\,:\,\beta ~\mbox{is~an ordinal}\,\}.
$$
Then $\gamma$ is an ordinal, and we have
$$
\gamma^+ = \gamma \cup \{\gamma\}
=\{\,\alpha \in \mathcal{P}(T)\,:\,\alpha ~\mbox{is~an ordinal}\,\}.
$$
\end{lem}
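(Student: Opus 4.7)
The plan is to split the lemma into two separate tasks: first verify that $\gamma$ is an ordinal, then establish the two set equalities.

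For the first task, I will check the two defining conditions in the definition of an ordinal (transitive, and every member is transitive). The second condition is immediate since every $\beta \in \gamma$ is by definition an ordinal, hence transitive. For transitivity of $\gamma$, I take $\beta \in \gamma$ and $\alpha \in \beta$; I need $\alpha \in \gamma$. The first listed fact about ordinals (members of ordinals are again ordinals) gives that $\alpha$ is an ordinal, and the transitivity of $T$ together with $\beta \in T$ yields $\beta \subseteq T$, so $\alpha \in T$. Hence $\alpha \in \gamma$.

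For the second task, the equality $\gamma^+ = \gamma \cup \{\gamma\}$ is simply the definition of the successor (Definition \ref{setmap}(a)), so only the equality with the set $S = \{\alpha \in \mathcal{P}[T] : \alpha \text{ is an ordinal}\}$ requires work. For the inclusion $\gamma \cup \{\gamma\} \subseteq S$: the element $\gamma$ is an ordinal (just shown) and a subset of $T$ by definition, hence $\gamma \in S$; any $\beta \in \gamma$ is an ordinal with $\beta \in T$, and transitivity of $T$ again gives $\beta \subseteq T$, so $\beta \in S$. For the reverse inclusion, I take $\alpha \in S$, so $\alpha$ is an ordinal with $\alpha \subseteq T$. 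Every member of $\alpha$ is then an ordinal (first listed fact) lying in $T$, so every member of $\alpha$ is in $\gamma$, i.e.\ $\alpha \subseteq \gamma$. Since both $\alpha$ and $\gamma$ are ordinals, the third listed fact (the subset/membership dichotomy for ordinals) forces $\alpha \in \gamma$ or $\alpha = \gamma$, which is exactly $\alpha \in \gamma \cup \{\gamma\}$.

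The main obstacle — really the only non-routine step — is the final passage from $\alpha \subseteq \gamma$ to $\alpha \in \gamma \cup \{\gamma\}$. Everything else is a direct unfolding of definitions, and the crucial point is that I must invoke the listed trichotomy-style property of ordinals rather than try to reprove it inside the lemma. Provided I am allowed to cite that property (stated just above the lemma), the argument is clean and splits neatly into the three steps above.
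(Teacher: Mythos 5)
Your proof is correct and follows essentially the same route as the paper: both arguments show $\gamma$ is transitive via the facts that members of ordinals are ordinals and that $T$ is transitive, and both reduce the set equality to the subset/membership dichotomy for ordinals (the paper phrases this as the equivalence of $\alpha \subseteq \gamma$ and $\alpha \subseteq T$ for ordinals $\alpha$, then invokes the same trichotomy-style fact). The only cosmetic difference is that the paper separately verifies that the right-hand set $\gamma_*$ is itself transitive before identifying it with $\gamma^+$, which your double-inclusion argument renders unnecessary.
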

\begin{proof}
We define the set
$$
\gamma_*
=\{\,\alpha \in \mathcal{P}(T)\,:\,\alpha ~\mbox{is~an ordinal}\,\}
=\{\,\alpha \subseteq T\,:\,\alpha ~\mbox{is~an ordinal}\,\}.
$$
\begin{itemize}
\item[1.] Let
$\vartheta \in \cup(\gamma_*)$. Then we have an ordinal 
$\eta \in \gamma_*$ with $\vartheta \in \eta$ and 
$\vartheta \subseteq \eta \subseteq T$ 
from the transitivity of $\eta$ and from $\eta \in \gamma_*$.
Now $\vartheta$ is an ordinal with $\vartheta \subseteq T$, i.e.\
$\vartheta \in \gamma_*$. We have $\cup(\gamma_*) \subseteq \gamma_*$
and see that $\gamma_*$ is transitive and hence an ordinal.
\item[2.] Let
$\vartheta \in \cup(\gamma)$. Then we have an ordinal 
$\eta \in \gamma$ with 
$\vartheta \in \eta \in T$ and $\eta \subseteq T$
from the transitivity of $T$.
Now $\vartheta$ is an ordinal with $\vartheta \in T$, i.e.\
$\vartheta \in \gamma$. We have $\cup(\gamma) \subseteq \gamma$
and see that $\gamma$ is transitive and hence an ordinal.
\item[3.] $\gamma^+$ is also an ordinal, and we have
\begin{align*}
\gamma^+ &=\{\alpha \,:\,\alpha \in \gamma^+ \}\\
&=\{\alpha \,:\,\alpha \in \gamma \mbox{~or~} \alpha=\gamma\}
=\{\alpha \,:\,\alpha \subseteq \gamma\ ~\mbox{is~an ordinal}\}\,.
\end{align*}
The latter condition $\alpha \subseteq \gamma$ only holds 
for ordinals and hence is equivalent with $\alpha \subseteq T$
due to the definition of $\gamma$. 
We obtain $\gamma^+ =\gamma_*$.
\end{itemize}
\end{proof}

\begin{lem}\label{ordlem2}
Let $T$ be a transitive set. We define 
$$
\gamma=\{\,\beta \in T\,:\,\beta ~\mbox{is~an ordinal}\,\}
$$
and
$$
\tilde{\gamma} 
=\{\,\alpha \in \mathcal{SP}(T)\,:\,\alpha ~\mbox{is~an ordinal}\,\}.
$$
If $\gamma$ is at most countably infinite,
then $\tilde{\gamma}$ is a countably infinite set.
\end{lem}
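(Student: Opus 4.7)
The plan is to iterate Lemma \ref{ordlem1} along the tower $T \subseteq \mathcal{P}[T] \subseteq \mathcal{P}^2[T] \subseteq \ldots$ defining $\mathcal{SP}[T]$. Recall from the proof of Theorem \ref{spthm1} that each $\mathcal{P}^n[T]$ is a transitive set. Hence I would define
$$
\gamma_n = \{\,\alpha \in \mathcal{P}^n[T] \,:\, \alpha \text{ is an ordinal}\,\}
$$
for $n \in \setN_0$, so that $\gamma_0 = \gamma$ and $\tilde{\gamma} = \bigcup_{n=0}^{\infty} \gamma_n$, where the last identity uses $\mathcal{SP}[T]=\bigcup_{n=0}^\infty \mathcal{P}^n[T]$ together with the fact that membership of an ordinal in $\mathcal{P}^n[T]$ is monotone in $n$.

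Next I would apply Lemma \ref{ordlem1} at each step to the transitive set $\mathcal{P}^n[T]$: this gives $\gamma_{n+1} = \gamma_n^+ = \gamma_n \cup \{\gamma_n\}$. Unwinding the recursion yields the explicit description
$$
\gamma_n = \gamma \cup \{\gamma, \gamma^+, \gamma^{++}, \ldots, \gamma^{+(n-1)}\},
$$
so that
$$
\tilde{\gamma} = \gamma \cup \{\gamma^{+n} : n \in \setN_0\}.
$$
In particular $\tilde{\gamma}$ contains the pairwise distinct ordinals $\gamma, \gamma^+, \gamma^{++}, \ldots$ (distinctness follows from Theorem \ref{tcthm2}(a) applied along the chain), so $\tilde{\gamma}$ is infinite.

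For countability, I would argue as follows: each $\gamma_n$ is obtained from $\gamma$ by adjoining $n$ further elements, so $\gamma_n$ is at most countably infinite whenever $\gamma$ is. Then $\tilde{\gamma}=\bigcup_{n \in \setN_0} \gamma_n$ is a countable union of at most countable sets, and hence at most countably infinite. Combined with the infinitude shown above, we conclude that $\tilde{\gamma}$ is countably infinite.

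The main obstacle, such as it is, lies in correctly setting up the iteration of Lemma \ref{ordlem1}: one must verify that applying the lemma to the transitive set $\mathcal{P}^n[T]$ yields precisely $\gamma_{n+1}=\gamma_n^+$, i.e.\ that the ordinals lying inside $\mathcal{P}^{n+1}[T]$ are exactly the ordinal successors of $\gamma_n$. Everything else is a bookkeeping argument about countable unions, done informally within the ambient ZFC as the introduction to Section \ref{RST_Model} allows.
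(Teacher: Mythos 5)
Your proposal is correct and follows essentially the same route as the paper: both iterate Lemma \ref{ordlem1} along the increasing chain of transitive sets $\mathcal{P}^n[T]$, observe that each step adds exactly one new ordinal (the previous $\gamma_n$ itself), and conclude by taking the countable union. Your version merely makes the recursion $\gamma_{n+1}=\gamma_n^+$ and the resulting explicit description $\tilde{\gamma}=\gamma\cup\{\gamma^{+n}:n\in\setN_0\}$ more detailed than the paper's terse statement.
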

\begin{proof}
We obtain from Theorem \ref{tcthm1} that
the sets $\mathcal{P}^n(T)$ with $n \in \setN_0$
form an increasing chain
$$
T = \mathcal{P}^0(T) \subseteq \mathcal{P}^1(T) 
\subseteq \mathcal{P}^2(T) \ldots
$$
of transitive sets with union $\mathcal{SP}(T)$.
It follows from Lemma \ref{ordlem1} and Theorem \ref{tcthm2}(a) 
that $\mathcal{P}^{n+1}(T) \cap \tilde{\gamma}$ has exactely
one ordinal more as a member than $\mathcal{P}^n(T) \cap \tilde{\gamma}$.
To conclude the proof we only have to note that
$$
\bigcup_{n=0}^{\infty} 
\left(\mathcal{P}^n(T) \cap \tilde{\gamma} \right)
=\tilde{\gamma}\,.
$$
\end{proof}

\begin{thm}\label{finalthm}
For $n \in \setN_0$ we recall the model set $\mathcal{U}_n$ 
in \eqref{unisets}. Then 
\begin{align}\label{ordun}
\{\,\alpha \in \mathcal{U}_n\,:\,\alpha ~\mbox{is~an ordinal}\,\}
\end{align}
is a countably infinite ordinal. 
\end{thm}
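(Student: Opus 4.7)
The plan is a double induction: on $n$ for the sets $\mathcal{U}_n$, and inside each stage an induction on $k$ for the approximating sets $V_{n,k}$. Write $\gamma_{n,k}=\{\alpha\in V_{n,k}:\alpha\text{ is an ordinal}\}$ and $\gamma_{n}=\{\alpha\in\mathcal{U}_n:\alpha\text{ is an ordinal}\}$. Since $\mathcal{U}_n$ is subset-friendly and hence transitive, Lemma~\ref{ordlem1} already gives that $\gamma_n$ is an ordinal. So the whole task reduces to showing that $\gamma_n$ is countably infinite.

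First I would dispatch the base case $n=0$. Here $V_{0,0}=\mathcal{SP}[\emptyset]$, and applying Lemma~\ref{ordlem2} with $T=\emptyset$ (vacuously, the set of ordinals in $\emptyset$ is empty, hence at most countably infinite) yields that $\gamma_{0,0}$ is countably infinite. Then, inducting on $k$: each $V_{0,k-1}$ is subset-friendly, hence transitive, so $V_{0,k}=\mathcal{SP}[V_{0,k-1}]$ together with Lemma~\ref{ordlem2} propagates the countability of $\gamma_{0,k-1}$ to $\gamma_{0,k}$. Finally observe that
\begin{equation*}
\gamma_0=\bigcup_{k=0}^{\infty}\gamma_{0,k},
\end{equation*}
which is a countable union of countable sets, hence countable; infiniteness is inherited from $\gamma_{0,0}$.

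For the inductive step $n\geq 1$, assume $\gamma_{n-1}$ is countably infinite. The definition $V_{n,0}=\mathcal{SP}[\mathcal{U}_{n-1}]$ together with the transitivity of $\mathcal{U}_{n-1}$ lets me apply Lemma~\ref{ordlem2} with $T=\mathcal{U}_{n-1}$; since $\gamma_{n-1}$ is exactly the set of ordinals of $T$ here, Lemma~\ref{ordlem2} gives that $\gamma_{n,0}$ is countably infinite. Then the same $k$-induction as before (using $V_{n,k}=\mathcal{SP}[V_{n,k-1}]$ and transitivity of $V_{n,k-1}$) shows $\gamma_{n,k}$ is countably infinite for every $k$, and $\gamma_n=\bigcup_k\gamma_{n,k}$ is again a countable union of countable sets.

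I do not anticipate a genuine obstacle here; Lemmas~\ref{ordlem1} and \ref{ordlem2} have been tailored exactly for this use, and the structural definition \eqref{unisets} of $V_{n,k}$ and $\mathcal{U}_n$ as iterated $\mathcal{SP}$'s of transitive sets matches their hypotheses perfectly. The only point requiring a little care is the bookkeeping showing that $\gamma_n$ really equals $\bigcup_k\gamma_{n,k}$, which is immediate from $\mathcal{U}_n=\bigcup_k V_{n,k}$, and the verification that each $V_{n,k-1}$ is transitive so that Lemma~\ref{ordlem2} is applicable at every step; both follow from Theorem~\ref{spthm1} by straightforward induction.
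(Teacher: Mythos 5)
Your proposal is correct and follows essentially the same route as the paper: iterate Lemma~\ref{ordlem2} along the construction \eqref{unisets} (the paper phrases this as a single "complete induction" starting from $V_{0,0}$, you spell it out as a double induction on $n$ and $k$), conclude countability from the fact that a countable union of countable sets is countable, and invoke Lemma~\ref{ordlem1} with the transitivity of $\mathcal{U}_n$ to see that the set of ordinals is itself an ordinal. The only cosmetic difference is that the paper identifies $V_{0,0}$ as the hereditarily finite sets for the base case, while you obtain it uniformly by applying Lemma~\ref{ordlem2} to $T=\emptyset$; both are fine.
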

\begin{proof}
The set of all hereditarily finite sets is given by $V_{0,0}$ in \eqref{unisets}.
A countably infinite union of countably infinite sets 
is countable. Starting with $V_{0,0}$, 
it follows from Lemma \eqref{ordlem2} by complete induction
that the set \eqref{ordun} is only countably infinite.
Since $\mathcal{U}_n$ is transitive,
we see from Lemma \ref{ordlem1} that \eqref{ordun} is an ordinal.
\end{proof}

The RST-models given by \eqref{unisets} can only serve as an example.
Beside the ${\mathcal U}_n$-models there are various other 
models for RST with only countable ordinals. 
Note that there are uncountably many countable ordinals.
We use the notation $\alpha<\beta$
in order to indicate that $\alpha$ and $\beta$ are
ordinals with $\alpha \in \beta$, and we
put $\alpha+1=\alpha^+$,
$V_{\emptyset}=\emptyset$ as well as
$V_{\alpha+1}=\mathcal{P}(V_{\alpha})$,
$V_{\beta}=\bigcup_{\alpha<\beta} V_{\alpha}$
for all ordinals $\alpha$ and for all limit ordinals $\beta$.
Then we obtain from Lemma \ref{ordlem1} and Lemma \ref{ordlem2} 
with $\omega_1 = \{\alpha\,:\, \alpha \mbox{ is a countable ordinal}\,\}$
that
$
V_{\omega_1} 
$
is a model for RST which extends the $\mathcal{U}_n$-models
from Theorem \ref{thm_models}. We note that every ordinal 
$\alpha \in V_{\omega_1}$ is only countable.
Let $\varphi_{< \omega_1}$ be an RST-formula
which expresses the following statement:
``For every ordinal $\alpha$ and for every inductive set $I$
there exists an injective mapping $f : \alpha \to I$."
We add the statement $\varphi_{< \omega_1}$ to RST
and denote the resulting formal system by RST$_{<\omega_1}$.
Then $V_{\omega_1}$ still remains a model for RST$_{<\omega_1}$.
In view of Remark \ref{sfrich}, for most applications
of set theory the restriction to the model
set $V_{\omega_1}$ is sufficient. 
We can specify further properties of $V_{\omega_1}$
by adding step by step appropriate new symbols and axioms to RST
and RST$_{<\omega_1}$. 
To study extensions of RST is a quite natural approach since there is no such thing as complete axiomatics for set theory anyway.
For this reason we have presented the general frame of sf-theories
in Section \ref{RST}, which also includes the formal system 
ZFC$'$. Here ZFC$'$ results from RST 
by adding the replacement axioms to RST. 
We see from Theorem \ref{nice_axioms} and Theorem \ref{spthm1}
that ZFC and ZFC$'$ are equivalent formal systems.
Due to Shoenfield \cite[Chapter 9.1]{Shoenfield}
we have the following replacement axioms 
for ZFC and ZFC$'$ with given formulas $F$ in RST:
\begin{equation}\label{repax}
\left\{
\begin{split}
\to & \,\forall x\,\exists z\,\forall y\,\leftrightarrow \,F\,\in y,z\\
& \exists u\,\forall y\,\to \,\exists x ~ \& \in x,v\,F \in y,u\,.\\
\end{split}
\right.
\end{equation}
Here $x,y,z,u,v$ run through all collections of distinct variables
with the restriction that $u,v,z$ are not occurring in $F$.
The statement $\varphi_{< \omega_1}$ is false
in ZFC. But RST$_{<\omega_1}$ and appropriate extensions
of RST$_{<\omega_1}$ with model set $V_{\omega_1}$
still provide a valid set theory in its own right.

Next we will also derive a transitive countable model
for RST.

\begin{lem}\label{omegalem}
Let $(A_k)_{k \in \setN}$ be a sequence of sets 
with $A_k \in V_{\omega_1}$ for all $k \in \setN$.
Then we have
$$
\{ A_k\,:\,k \in \setN\} \in V_{\omega_1}\,.
$$
\end{lem}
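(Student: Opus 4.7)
The plan is to locate each $A_k$ at a countable level of the cumulative hierarchy, bound all of these levels by a single countable ordinal using the fact that a countable union of countable ordinals is countable, and then place the whole sequence one level higher.

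More precisely, I would first observe that by the definition of $V_{\omega_1} = \bigcup_{\alpha < \omega_1} V_\alpha$, for each $k \in \setN$ there exists a countable ordinal $\alpha_k < \omega_1$ with $A_k \in V_{\alpha_k}$. Without loss of generality one can take $\alpha_k$ to be the least such ordinal, or simply use countable choice to pick one. Next I would set $\beta = \sup_{k \in \setN} \alpha_k$. Since each $\alpha_k$ is countable and the union of countably many countable sets is countable, $\beta$ is itself a countable ordinal (equivalently, $\beta \in \omega_1$), and hence $\beta < \omega_1$.

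Now I would use that the sequence $(V_\alpha)_{\alpha < \omega_1}$ is $\subseteq$-monotone: for $\alpha \le \gamma$ one has $V_\alpha \subseteq V_\gamma$, which follows by transfinite induction from $V_{\alpha+1} = \mathcal{P}[V_\alpha]$ together with the transitivity of $V_\alpha$ (guaranteed by Theorem \ref{tcthm1} and complete induction, just as in part 2 of the proof of Theorem \ref{spthm1}). In particular $A_k \in V_{\alpha_k} \subseteq V_\beta$ for every $k$, so $\{A_k : k \in \setN\} \subseteq V_\beta$, giving
\[
\{A_k : k \in \setN\} \in \mathcal{P}[V_\beta] = V_{\beta+1}.
\]
Since $\omega_1$ is a limit ordinal we have $\beta+1 < \omega_1$, so $V_{\beta+1} \subseteq V_{\omega_1}$, which yields the desired membership.

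The only point that requires any care is the assertion that $\beta$ is a countable ordinal, i.e.\ that a countable supremum of countable ordinals is countable; this is the standard argument using countable choice together with the fact that a countable union of countable sets is countable, and it is exactly the tool already invoked in Theorem \ref{finalthm} and Lemma \ref{ordlem2}. Everything else is routine bookkeeping with the cumulative hierarchy, so I expect no serious obstacle beyond making the indexing of the $\alpha_k$ explicit.
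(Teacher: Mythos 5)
Your proof is correct and follows essentially the same route as the paper: pick a countable ordinal $\alpha_k$ with $A_k \in V_{\alpha_k}$ for each $k$, take the countable supremum $\beta = \bigcup_k \alpha_k \in \omega_1$, and conclude $\{A_k : k \in \setN\} \in \mathcal{P}[V_\beta] = V_{\beta+1} \subseteq V_{\omega_1}$. The only cosmetic difference is that the paper finishes via $V_{\beta+1} \in V_{\omega_1}$ together with the transitivity of $V_{\omega_1}$, while you use $V_{\beta+1} \subseteq V_{\omega_1}$ directly; both are fine.
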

\begin{proof}
We put $\mathcal{A}=\{ A_k\,:\,k \in \setN\}$. 
It follows from the definition of $V_{\omega_1}$ that 
for all $k \in \setN$ we have a countable ordinal $\alpha_k$ 
with $A_k \in V_{\alpha_k}$. Hence we can form the 
countable ordinal 
$\begin{displaystyle}
\alpha = \bigcup_{k \in \setN} \alpha_k \in \omega_1
\end{displaystyle}$ and see that
$
\mathcal{A} \subseteq V_{\alpha} \in V_{\omega_1}\,.
$
Since $\mathcal{A} \in \mathcal{P}(V_{\alpha})=V_{\alpha+1} \in V_{\omega_1}$, we obtain the desired result from the transitivity of $V_{\omega_1}$.
\end{proof}

\begin{thm}\label{finalthm2} 
We have a countable transitive set $\mathcal{U} \in V_{\omega_1}$ such
that the true binary membership relation 
$$
E=\{(A,B)\,:\, A \in B \mbox{~and~}
 A \in \mathcal{U} \mbox{~and~}  B \in \mathcal{U} \,\}
$$
makes $\mathcal{U}$ a model for RST 
which is elementarily equivalent to $V_{\omega_1}$.
\end{thm}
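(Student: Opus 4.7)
The plan is to combine the downward L\"owenheim--Skolem theorem with the Mostowski collapse.

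First, working at the meta-level in ZFC (as throughout Section~\ref{RST_Model}), the downward L\"owenheim--Skolem theorem produces a countable set $M \subseteq V_{\omega_1}$ such that $(M,\in)$ is an elementary substructure of $(V_{\omega_1},\in)$ in the language of RST. Since $V_{\omega_1}$ is a model for RST (as noted after Theorem~\ref{finalthm}), axiom A1 (extensionality) holds in $V_{\omega_1}$ and hence, by elementarity, in $(M,\in)$. Moreover $\in$ restricted to $M$ is well-founded because $\in$ is well-founded on $V_{\omega_1}$.

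Next I would apply the Mostowski collapse to $(M,\in)$: recursively along the well-founded relation, define $\pi(a) = \{\pi(b) : b \in a \cap M\}$ for $a \in M$. Extensionality of $(M,\in)$ forces $\pi$ to be injective, and $\mathcal{U} := \pi[M]$ is then a transitive set with $\pi : (M,\in) \to (\mathcal{U},\in)$ an isomorphism of $\in$-structures. Consequently $(\mathcal{U},\in)$ is elementarily equivalent to $(M,\in)$, and therefore to $(V_{\omega_1},\in)$; in particular $\mathcal{U}$ validates all axioms of RST.

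Finally I would check that $\mathcal{U} \in V_{\omega_1}$. Countability of $\mathcal{U}$ follows from countability of $M$. A straightforward induction along $\in$ on $M$ gives $\mathrm{rank}(\pi(a)) \leq \mathrm{rank}(a)$ for each $a \in M$, so every element of $\mathcal{U}$ has countable rank and hence belongs to $V_{\omega_1}$. Enumerating $\mathcal{U} = \{A_k : k \in \setN\}$, Lemma~\ref{omegalem} then yields $\mathcal{U} \in V_{\omega_1}$.

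The only delicate point, rather than a genuine obstacle, is this last step: showing that $\mathcal{U}$ itself, not merely each of its elements, lies in $V_{\omega_1}$. Lemma~\ref{omegalem} is precisely what is needed here, since the replacement axioms are not available in RST and we must instead exploit the countability of $\mathcal{U}$ together with the $\omega_1$-chain structure of the cumulative hierarchy used to define $V_{\omega_1}$.
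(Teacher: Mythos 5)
Your proposal is correct and follows essentially the same route as the paper: downward L\"owenheim--Skolem applied to $V_{\omega_1}$, then the Mostowski collapse of the countable elementary submodel, then Lemma~\ref{omegalem} to place the transitive collapse $\mathcal{U}$ inside $V_{\omega_1}$. The only (harmless) variation is in verifying $\mathcal{U} \subseteq V_{\omega_1}$: you use the rank inequality $\mathrm{rank}(\pi(a)) \leq \mathrm{rank}(a)$, whereas the paper runs an $\in$-induction on the property ``$A \in \mathcal{U}' \Rightarrow f(A) \in V_{\omega_1}$'' using Lemma~\ref{omegalem} at each stage; both arguments are sound and essentially equivalent.
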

\begin{proof} The downward L\"owenheim-Skolem theorem 
gives an elementary equivalent countable submodel $\mathcal{U}'$ of 
$V_{\omega_1}$ for RST, 
see for example \cite[3.1 Theorems of Skolem]{H}.
The set $\mathcal{U}'$ is extensional in the sense of 
\cite[Definition 6.14]{J}, i.e. for any two distinct sets
$A, B \in \mathcal{U}'$ we have 
$A \cap \mathcal{U}' \neq B \cap \mathcal{U}'$.
Mostowski's Collapsing Theorem \cite[Theorem 6.15(ii)]{J}
gives a transitive set $\mathcal{U}$ and
an isomorphism $f : \mathcal{U}' \to \mathcal{U}$
such that $A \in B \Leftrightarrow f(A) \in f(B)$ 
for all $A, B \in \mathcal{U}'$. We see that $f$ preserves elementary equivalence. 

It remains to prove that $\mathcal{U} \in V_{\omega_1}$.
Due to Lemma \ref{omegalem} it is sufficient to show that
$\mathcal{U} \subseteq V_{\omega_1}$. 
We say that a set $A \in V_{\omega_1}$ has property
$\Phi(A)$ iff there holds the implication
$$
A \in \mathcal{U}' \Rightarrow f(A) \in V_{\omega_1}\,.
$$

For all $A \in \mathcal{U}'$ we have
$
f(A)=\{f(B) : B \in A \cap \mathcal{U}'\}\,.
$
Hence $\Phi(\emptyset)$ is valid from $\emptyset \in V_{\omega_1}$, 
and we obtain $\Phi(A)$ from Lemma \ref{omegalem}  
whenever there holds $\Phi(B)$ for every $B \in A$. 
We see by $\in$-induction from \cite[Theorem 6.4]{J} 
that $\Phi(A)$ is valid for all $A \in V_{\omega_1}$ and that
$\mathcal{U} \subseteq V_{\omega_1}$.
\end{proof}

\newpage

\begin{rem}\label{last_remark} We highlight two important aspects of our work.
\begin{itemize}
\item[(a)]
Axiomatic set theory cannot provide an absolutely valid description for large and especially uncountable ordinal numbers. By using the reduced set theory RST, we do not make existence statements for such ordinals. We do not see the resulting undecidable statements as a weakness of our theory, but rather as a strength. Indeed, they enable the expansion of the axiom system and the corresponding model sets and they force us to provide a more explicit description of the ordinals used in these models.
Now we can also study models of RST which are initial segments of the von Neumann hierarchy of sets.
\item[(b)] The subset-friendly theories in Definition \ref{sftheories} are extensions of RST 
with new axioms and new symbols in the formulas. These are well suited for the formulation of general axiomatic theories where set theo\-ry is available in the background. This is possible because in principle all mathematical objects can be reduced to sets, so we do not need a predicate symbol for sets in these theories. On the other hand, in a subset-friendly theory we do not necessarily have to describe \textit{how} we construct mathematical objects using sets. For example, we can set up a system of axioms for real analysis using set theory without having to describe the exact construction of the real numbers. 
\end{itemize}
\end{rem}

\section*{Acknowledgement}
I am very thankful to Gerald Warnecke for encouragement and support of my work.

\end{document}